\newcommand {\C} [1] {{\mathbb C}^{#1}}
\newcommand {\R} [1] {{\mathbb R}^{#1}}
\newcommand {\pdd} [1] {\displaystyle{\frac{\partial }{\partial
{#1}}}}
\newtheorem{theorem}{Theorem}[section]
\newtheorem{lemma}[theorem]{Lemma}
\newtheorem{cor}[theorem]{Corollary}
\newtheorem{prop}[theorem]{Proposition}
\newtheorem{remark}[theorem]{Remark}
\numberwithin{equation}{section}
\begin{document}
\title{The Szeg\"o Kernel for Certain Non-Pseudoconvex Domains in $\C{2}$}
\author{Michael Gilliam and Jennifer Halfpap}\thanks{The second author was supported in part by NSF grant DMS-0654195.}
\maketitle

\section{Introduction}

Let $\Omega \subset \C{n}$ be a domain with smooth boundary
$\partial \Omega$.  Let $\mathcal{O}(\Omega)$ denote the space of
holomorphic functions on $\Omega$.  Associated with such a domain
are certain operators: the Bergman projection $\mathcal{B}$ and
the Szeg\"o projection $\mathcal{S}$. The former is the orthogonal
projection of $L^2(\Omega)$ onto the closed subspace $L^2(\Omega)
\cap \mathcal{O}(\Omega)$, while the latter is the orthogonal
projection of $L^2(\partial \Omega)$ onto the closed subspace
$\mathcal{H}^2(\Omega)$ of boundary values of elements of
$\mathcal{O}(\Omega)$. Ultimately, one would like results
concerning the mapping properties of these operators (e.g.,
conditions under which they extend to bounded operators on the
appropriate $L^p$ spaces).

Often, an understanding of these operators begins with an
investigation of an associated integral kernel. That is, often one
can identify distributions $B$ and $S$ so that for $f \in L^2
(\Omega)$ and $g\in L^2(\partial \Omega)$,
$$\mathcal{B}[f](z)=\int_{\Omega} f(w) B(z,w) \,dw$$
$$\mathcal{S}[g](z)=\int_{\partial \Omega}g(w) S(z,w) \,d\sigma(w).$$

Much is known about these kernels for pseudoconvex domains of
finite type. Kohn's formula connects the Bergman projection with
the $\bar{\partial}$-Neumann operator. Kerzmann \cite{Kerzman:72}
uses this connection to show that $B$ is equal to a $C^\infty$
function on $(\overline{\Omega}\times \overline{\Omega})\setminus
\Delta$, where $\Delta=\{\,(z,w)\in \partial \Omega \times
\partial \Omega : z=w\,\}$ is the diagonal of the boundary.

Much more is known about the operators in some settings; a number
of authors obtain sharp estimates on the kernels and their
derivatives near the diagonal as well as results on the mapping
properties of the operators. This is done, for example, by Nagel,
Rosay, Stein, and Wainger \cite{NRSW:89} for finite-type domains
in $\C{2}$ and by McNeal and Stein (\cite{McNeal:94},
\cite{McNealStein:94},\cite{McNealStein:97}) for convex domains in
$\C{n}$.

In contrast with the situation for pseudoconvex domains,
comparatively little is known about the Szeg\"o kernel for
non-pseudoconvex domains. A notable exception is the work of
Carracino (\cite{CarracinoPHD}, \cite{Carracino:07}), in which she
obtains detailed estimates for the Szeg\"o kernel on the boundary
of the (non-smooth) non-pseudoconvex domain of the form
\begin{equation}\label{tube domain}
\Omega =\{\,(z_1=x+iy,z_2=t+i\xi): \xi > b(x)  \,\}
\end{equation}
with
\begin{equation}\label{Christine's b}
b(x)=\begin{cases}(x+1)^2 & x<-\frac{1}{2}\\ -x^2+\frac{1}{2} &
-\frac{1}{2} \le x \le \frac{1}{2}\\ (x-1)^2 & \frac{1}{2} < x.
\end{cases}
\end{equation}
She shows that the Szeg\"o kernel has singularities off of
$\Delta$ in this case.

In this paper, we consider certain non-pseudoconvex domains of the
form \eqref{tube domain} for which $b$ is smooth, real-valued, but
not convex.  One checks that such a domain fails to be
pseudoconvex if and only if $b$ fails to be convex.  More
specifically, we take
\begin{equation}\label{our b's}
b(x)=\frac{1}{4}x^4 + \frac{1}{2}px^2+qx,\quad  p<0, q\in
\mathbb{R}.
\end{equation}
(Note that the condition on $p$ is precisely the one required for
a quartic of this form to be non-convex.) Our goal is to identify
sets in $\C{2} \times \C{2}$ on which the integrals defining the
Szeg\"o kernel and its derivatives are absolutely convergent.

\section{Definitions, Notation, and Statement of Results}

We begin with a more precise discussion of the Szeg\"o projection
operator and its associated integral kernel for domains in $\C{2}$
having the form \eqref{tube domain}. We take $b$ smooth so that
$\Omega \subset \C{2}$ is smoothly-bounded. As above, let
$\mathcal{O}(\Omega)$ denote the space of functions holomorphic on
$\Omega$.  Define
$$\mathcal{H}^2(\Omega):=\left\{\,F\in \mathcal{O}(\Omega): \sup_{\varepsilon > 0} \int_{\partial \Omega} |F(x+iy, t+ib(x)+i\varepsilon)|^2\, dx\,dy\,dt<\infty\,\right\}.$$
$\mathcal{H}^2(\Omega)$ can be identified with the space of $f \in
L^2(\partial \Omega)$ which are solutions (in the sense of
distributions) to
\begin{equation}\label{CR equation}
\left(\pdd{x} + i\pdd{y} -ib'(x) \pdd{t} \right)[f] \equiv 0.
\end{equation}
With this identification, we define the {\it Szeg\"o projection
operator $\mathcal{S}$} to be the orthogonal projection of
$L^2(\partial \Omega)$ onto this (closed) subspace
$\mathcal{H}^2(\Omega)$.

One can then prove the existence of an integral kernel associated
with the operator.  This is discussed, for example, in
\cite{St(BB):72}, where the approach is as follows: Begin with an
orthonormal basis $\{\phi_j\}$ for $\mathcal{H}^2(\Omega)$ and
form the sum
$$S(z,w)=\sum_{j=1}^{\infty} \phi_j(z)\overline{\phi_j(w)}.$$
One shows that this converges uniformly on compact subsets of
$\Omega \times \Omega$, that $\overline{S(z , \cdot)}\in
\mathcal{H}^2(\Omega)$ for each $z \in \Omega$, and that for $g
\in \mathcal{H}^2(\Omega)$,
$$g(z)=\int_{\partial \Omega} S(z,w) g(w)\,d\sigma(w).$$
$S$ is then the {\it Szeg\"o kernel}.  From its construction it is
clear that it will be smooth on $\Omega \times \Omega$.  It may
extend to a smooth function on some larger subset of
$\overline{\Omega} \times \overline{\Omega}$.

For domains of the form \eqref{tube domain}, one can derive an
explicit formula for the Szeg\"o kernel.  Let $z=(z_1, z_2)$ and
$w=(w_1, w_2)$ be elements of $\C{2}$.  Set
\begin{equation}\label{N(eta,tau)}
N(\eta, \tau)=\int_{-\infty}^\infty e^{2\tau[\eta \lambda -
b(\lambda)]}\,d\lambda.
\end{equation}
Then
\begin{equation}\label{Szego kernal inside}
S(z,w)=c\int \!\!\!\int_{\tau > 0} \tau e^{\eta \tau [z_1 +
\bar{w_1}]+i\tau[z_2 - \bar{w_2}]} [N(\eta, \tau)]^{-1} \,d\eta
\,d\tau,
\end{equation}
where $c$ is an absolute constant.
\begin{remark}
See \cite{HNW:09} for detailed discussions of $\mathcal{H}^p$
spaces for unbounded domains, the derivations of such integral
formulas, and the identification of $\mathcal{H}^2(\Omega)$ with
$L^2(\Omega)$ functions satisfying the differential equation
\eqref{CR equation}.
\end{remark}
\begin{remark}
Many authors only consider $S$ as a distribution on $\partial
\Omega \times
\partial \Omega$ since $S$ is smooth on $\Omega \times \Omega$.  In this situation, one can identify the boundary with $\R{3}$ and
consider the integral kernel
\begin{eqnarray}\label{Szego kernel on R3 X R3}
\lefteqn{\mathcal{S}[(x,y,t),(r,s,u)]=}\nonumber\\
& &c\int_0^\infty \!\!\int_{-\infty}^\infty \tau e^{\tau[i(t-u)+i
\eta (y-s)-[b(x)+b(r)-\eta(x+r)]]}\left[N(\eta, \tau)\right]^{-1}
\,d\eta \,d\tau.\label{szego kernel for tube}
\end{eqnarray}
This is done, for example, in the work of Nagel
\cite{Nag(Beijing):86}, Haslinger \cite{Haslinger:95}, and
Carracino \cite{CarracinoPHD}, \cite{Carracino:07}.
\end{remark}

We may now state our results:

Let $b$ be as in \eqref{our b's} and let
\begin{gather}
z=(z_1,z_2)=(x+iy, t+ib(x)+i h)\\
w=(w_1,w_2)=(r+is,u+ib(r)+i k).
\end{gather}
Define
\begin{equation}\label{sigma}
\Sigma=\{\,(z,w): x=r \;\text{and}\; |x| > \sqrt{-p}\,\}\cup
\{\,(z,w): |x|=|r|=\sqrt{-p} \,\}.
\end{equation}
Also, for a continuous function $b$ on $\mathbb{R}$, define the
{\it Legendre transform} of $b$ by
\begin{equation}
b^*(\eta):=\sup_{\lambda \in \mathbb{R}} [\eta \lambda -
b(\lambda)].
\end{equation}
\begin{theorem}\label{theorem: absolute convergence}
The integral defining $S(z,w)$ is absolutely convergent in the
region in which
\begin{equation}
h+k+b(x)+b(r)-2b^{**}\left(\frac{x+r}{2}\right)>0.
\end{equation}
This is an open neighborhood of $(\overline{\Omega}\times
\overline{\Omega})\setminus \Sigma$. More generally, if $i_1$,
$j_1$, $i_2$, and $j_2$ are non-negative integers, then
\begin{equation}\label{derivatives of S}
\partial^{i_1}_{z_1} \partial^{j_1}_{\bar{w_1}} \partial^{i_2}_{z_2}
\partial^{j_2}_{\bar{w_2}}S(z,w)=c' \int \!\!\! \int_{\tau>0} e^{\eta \tau
[z_1+\bar{w_1}]+i\tau[z_2-\bar{w_2}]}\frac{\eta^{i_1+j_1}
\tau^{i_1+j_1+i_2+j_2+1}}{N(\eta,\tau)}\,d\eta\,d\tau
\end{equation}
is absolutely convergent in the same region.
\end{theorem}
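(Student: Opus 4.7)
The approach is to dominate the modulus of the integrand in \eqref{derivatives of S} and verify $d\eta\,d\tau$-integrability. Since $z_1+\bar{w_1}=(x+r)+i(y-s)$ and $z_2-\bar{w_2}=(t-u)+i(b(x)+b(r)+h+k)$, the absolute integrand equals
$$|\eta|^{i_1+j_1}\,\tau^{i_1+j_1+i_2+j_2+1}\,e^{\tau\eta(x+r)-\tau[b(x)+b(r)+h+k]}\,N(\eta,\tau)^{-1},$$
so everything hinges on a good upper bound for $N(\eta,\tau)^{-1}$.

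The first step, and the main technical hurdle, is a uniform Laplace-type lower bound on $N$. Writing $N(\eta,\tau)=e^{2\tau b^*(\eta)}\int_{\mathbb R}e^{-2\tau G_\eta(\lambda)}\,d\lambda$ with $G_\eta(\lambda):=b(\lambda)-\eta\lambda+b^*(\eta)\ge 0$, and Taylor-expanding about a global minimizer $\lambda_\eta$ of $b(\lambda)-\eta\lambda$, one obtains
$$G_\eta(\lambda)=\tfrac{1}{2}b''(\lambda_\eta)(\lambda-\lambda_\eta)^2+\lambda_\eta(\lambda-\lambda_\eta)^3+\tfrac{1}{4}(\lambda-\lambda_\eta)^4.$$
The subtlety is that $b$ is not convex, so $\lambda_\eta$ may jump between branches of the cubic $b'(\lambda)=\eta$ at some critical $\eta_c$. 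The key observation is that for \eqref{our b's} with $p<0$, no \emph{global} minimizer can have $b''(\lambda_\eta)=0$ (such points are inflection points of $b$, not minima), the jump at $\eta_c$ occurs strictly inside the three-roots region on the outer branches where $b''>0$, and $b''(\lambda_\eta)\to\infty$ as $|\eta|\to\infty$. Thus $b''(\lambda_\eta)\ge c_0>0$ uniformly, and a Gaussian comparison yields
$$N(\eta,\tau)^{-1}\;\le\;C\,(1+|\eta|)^{1/3}(1+\tau)^{1/2}\,e^{-2\tau b^*(\eta)}$$
valid for all $\eta\in\mathbb R$ and $\tau>0$.

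Feeding this back, the exponential factor is dominated by $e^{\tau[\eta(x+r)-2b^*(\eta)]-\tau[h+k+b(x)+b(r)]}$, and the Legendre-duality identity
$$\sup_{\eta\in\mathbb R}\bigl[\eta(x+r)-2b^*(\eta)\bigr]=2b^{**}\!\left(\tfrac{x+r}{2}\right)$$
bounds this exponent by $-\tau c$, with $c:=h+k+b(x)+b(r)-2b^{**}((x+r)/2)$. Since $b^*(\eta)\sim\tfrac{3}{4}|\eta|^{4/3}$ at infinity, the exponent is also $\le -c'\tau|\eta|^{4/3}$ for large $|\eta|$. Splitting the $\eta$-domain into a bounded piece (where $e^{-\tau c}$ suffices) and a tail (where the $|\eta|^{4/3}\tau$ decay absorbs all polynomial prefactors via the substitution $\mu=\tau^{3/4}\eta$), a routine Gamma-type computation gives convergence whenever $c>0$.

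Finally I would identify $\{c>0\}$ as an open neighborhood of $(\overline\Omega\times\overline\Omega)\setminus\Sigma$. Openness is immediate from continuity of $c$. For $(z,w)\in\overline\Omega\times\overline\Omega$, $h,k\ge 0$ and $b^{**}((x+r)/2)\le\tfrac{1}{2}(b^{**}(x)+b^{**}(r))\le\tfrac{1}{2}(b(x)+b(r))$, so $c\ge 0$; equality forces $h=k=0$, $b(x)=b^{**}(x)$, $b(r)=b^{**}(r)$, and $b^{**}$ affine on the segment joining $x$ and $r$. A direct analysis of the convex envelope of \eqref{our b's}—whose affine bridge connects two distinguished touching points (given by $|x|=\sqrt{-p}$ when $q=0$, and by the analogous double-tangent pair for general $q$)—identifies this equality set in the $(x,r)$-plane with precisely $\Sigma$.
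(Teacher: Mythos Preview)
Your proof is correct and follows a genuinely different, more economical route than the paper's. Two substantive differences:

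\textbf{(1) Estimate on $N^{-1}$.} The paper devotes all of Section~4 to proving a sharp two-sided bound $\int e^{-p}\approx[\beta^{1/4}+|\gamma|^{1/3}+\delta^{1/2}]^{-1}$ for non-convex quartics, then applies it to get $N(\eta,\tau)\approx e^{2\tau b^*(\eta)}[\tau^{1/4}+\tau^{1/3}|\lambda(\eta)|^{1/3}+\tau^{1/2}(3\lambda(\eta)^2+p)^{1/2}]^{-1}$. You bypass this by observing that the global maximizer always satisfies $|\lambda_\eta|\ge\sqrt{-p}$, hence $b''(\lambda_\eta)=3\lambda_\eta^2+p\ge -2p>0$ uniformly, and a Gaussian lower bound on $\int e^{-2\tau G_\eta}$ suffices. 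This is enough for Theorem~\ref{theorem: absolute convergence}; the paper needs the full sharp estimate only for the divergence result in Theorem~\ref{theorem: x=-r singularity}. Your Gaussian step is a little compressed: since $G_\eta$ is a \emph{non-convex} quartic, you should note that the cubic coefficient satisfies $|\lambda_\eta|^2\le\tfrac12 b''(\lambda_\eta)$ (because $b''(\lambda_\eta)\ge 2\lambda_\eta^2$ once $\lambda_\eta^2\ge -p$), which is what keeps the cubic term tame on the Gaussian window $|y|\lesssim(\tau b''(\lambda_\eta))^{-1/2}$.

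\textbf{(2) Identification of $\Sigma$.} The paper argues via the map $\eta\mapsto\lambda(\eta)$ of Section~3 and a three-case analysis of when $A(x,r,\eta)$ can vanish. Your Jensen/convex-envelope argument is cleaner: equality in $b(x)+b(r)\ge 2b^{**}((x+r)/2)$ forces $b(x)=b^{**}(x)$, $b(r)=b^{**}(r)$, and $b^{**}$ affine on $[x\wedge r,x\vee r]$.

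One factual slip: the double-tangent touching points are $\pm\sqrt{-p}$ for \emph{every} $q$, not just $q=0$. Indeed $b(\lambda)-q\lambda=\tfrac14\lambda^4+\tfrac{p}{2}\lambda^2$ is even with minima at $\pm\sqrt{-p}$, and adding back the linear term $q\lambda$ tilts the bridging line to slope $q$ without moving the contact points. This is why $\Sigma$ is stated in terms of $\sqrt{-p}$ independently of $q$; once you make that correction, your envelope description matches $\Sigma$ exactly.
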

\begin{remark}
Compare this with Theorem 3.2 in \cite{HNW:09}.  In that theorem,
the domain is of the form \eqref{tube domain} for $b$ convex, and
the region in which the integrals converge absolutely is defined
by the inequality
$$h+k+b(x)+b(r)-2b\left(\frac{x+r}{2}\right)>0.$$
These two theorems are, in fact, analogous since the Legendre
transform is an involution on the set of {\it convex} functions.
\end{remark}
\begin{theorem}\label{theorem: x=-r singularity}
If $[(x+i0,0+ib(x)),(r+i0,0+ib(r))] \in \Sigma$,
$\mathcal{S}[(x,0,0),(r,0,0)]$ is infinite. Also, if $\delta =
h+k> 0$,
$$\lim_{\delta \to 0^+} S[(x,i(b(x)+h)),(r,i(b(r)+k))]=\infty.$$
\end{theorem}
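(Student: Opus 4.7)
The plan is to substitute the specified points into formula \eqref{Szego kernal inside}, use a Laplace-type upper bound on $N(\eta,\tau)$ to reduce the integrand essentially to $\tau^{3/2}\,e^{\tau[E(\eta)-\delta]}$, and then extract an explicit lower bound on $S(z,w)$ that diverges as $\delta=h+k\to 0^+$.

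First, at the given points one has $z_1+\overline{w_1}=x+r$ and $z_2-\overline{w_2}=i(b(x)+b(r)+h+k)$, so \eqref{Szego kernal inside} becomes
\begin{equation*}
S(z,w) \;=\; c\int_0^\infty\!\!\int_{-\infty}^\infty \tau\,\frac{e^{\tau[\eta(x+r)-b(x)-b(r)-\delta]}}{N(\eta,\tau)}\,d\eta\,d\tau.
\end{equation*}
The principal analytic input is the large-$\tau$ upper bound $N(\eta,\tau)\le C\,\tau^{-1/2}e^{2\tau b^*(\eta)}$, valid uniformly for $\eta$ in a neighborhood of the critical values of interest. Away from the single exceptional value $\eta^{0}$ at which two interior maximizers of $\lambda\mapsto\eta\lambda-b(\lambda)$ coexist (namely $\eta^{0}=0$ in the symmetric case $q=0$), this is the classical Laplace method with a unique nondegenerate interior maximizer. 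At $\eta^{0}$ one splits the $\lambda$-integral defining $N$ into two halves and applies Watson's lemma separately. Setting $E(\eta):=\eta(x+r)-2b^*(\eta)-b(x)-b(r)$, insertion yields the lower bound
\begin{equation*}
S(z,w) \;\ge\; c'\int_T^\infty \tau^{3/2}\!\int_I e^{\tau[E(\eta)-\delta]}\,d\eta\,d\tau
\end{equation*}
for a suitable $T>0$ and an interval $I$ around the maximizer $\eta^*$ of $E$. The Legendre-transform identity $\sup_\eta[\eta(x+r)-2b^*(\eta)]=2b^{**}(\tfrac{x+r}{2})$ shows that $\sup_\eta E(\eta)$ vanishes precisely on $\Sigma$.

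Next, one performs a case-by-case Laplace analysis of the inner $\eta$-integral on $\Sigma$. When $x=r$ with $|x|\ge\sqrt{-p}$, the maximizer $\eta^*=b'(x)$ sits where $E$ is $C^2$ with $E''(\eta^*)=-2/b''(x)<0$, so the inner integral is $\asymp\tau^{-1/2}$ and the outer integral is $\asymp\int_T^\infty\tau\,e^{-\tau\delta}\,d\tau\asymp\delta^{-2}$. When $x=-r=\pm\sqrt{-p}$, the midpoint equals $0$, which lies in the relative interior of the flat portion of $b^{**}$; correspondingly $b^*$ has a corner at $\eta^*=0$ (the global maximizer of $\eta\lambda-b(\lambda)$ jumps between $\pm\sqrt{-p}$), so $E(\eta)\sim -2\sqrt{-p}\,|\eta|$ near $0$ and the inner integral is $\asymp\tau^{-1}$; the outer integral becomes $\asymp\int_T^\infty\tau^{1/2}\,e^{-\tau\delta}\,d\tau\asymp\delta^{-3/2}$. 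In every case the lower bound on $S$ tends to $\infty$ as $\delta\to 0^+$, which establishes the limit assertion. The statement that $\mathcal{S}[(x,0,0),(r,0,0)]$ is itself infinite on $\Sigma$ then follows either by monotone convergence as $\delta\downarrow 0$, or directly by Tonelli applied to the non-negative integrand at $\delta=0$ (the same inner-$\eta$ bound leaves a non-integrable tail in $\tau$).

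The chief obstacle I anticipate is establishing the Laplace asymptotic for $N(\eta,\tau)$ uniformly in $\eta$ on a neighborhood of the corner value $\eta^{0}$, where two distinct interior maximizers of $\lambda\mapsto\eta\lambda-b(\lambda)$ coexist and the standard single-critical-point error estimate loses uniformity. A careful Watson-type expansion on each half-line of the $\lambda$-integral, with bookkeeping of the exponentially small contribution from the non-dominant branch, should resolve this. Once that uniform estimate is in hand, the remaining integral evaluations are routine one-variable Laplace computations.
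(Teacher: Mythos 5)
Your proof is correct and reaches the same conclusion as the paper, but by a genuinely different organization of the estimate. The paper uses its two-sided uniform estimate (Lemma \ref{lemma: unif est on int e^p}) to replace $N(\eta,\tau)^{-1}$ by a sum of three explicit terms, carries out the $\tau$-integral in closed form to produce an integrand $\approx [A(x,r,\eta)]^{-9/4}$, and then shows (via Corollary \ref{cor: A near eta_0} and Proposition \ref{prop: A if |x|=sqrt(-p)}) that $A$ vanishes to order $2$ (resp.\ order $1$) at a unique $\eta_0$, so the resulting $\eta$-singularity of order $9/2$ (resp.\ $9/4$) is non-integrable. You instead keep only the easy one-sided bound $N\lesssim \tau^{-1/2}e^{2\tau b^*(\eta)}$ (which is all you need for a lower bound on $S$), do the Laplace analysis in $\eta$ first over a small interval around the maximizer of $E=-A$, and then show the residual $\tau$-integral diverges; this is cleaner and requires less of the Section 4 machinery, and as a bonus it gives an explicit rate of blow-up ($\delta^{-2}$ or $\delta^{-3/2}$) as $\delta\to 0^+$, which the paper does not state. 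The paper's route buys a more refined description of the nature of the $\eta$-singularity and is more tightly integrated with the uniform integral estimates it had already developed.

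Two small points worth tightening. First, your remark that ``the maximizer $\eta^*=b'(x)$ sits where $E$ is $C^2$'' fails when $|x|=\sqrt{-p}$, since then $\eta^*=q$ is precisely the corner of $b^*$; you still get $\int_I e^{\tau E}\gtrsim \tau^{-1/2}$ because the quadratic lower bound on $|E|$ holds on the one-sided neighborhood $\eta>q$ (or $\eta<q$), but you should say so rather than invoke $C^2$ regularity, as the paper does by singling this out as a separate case. Second, the ``chief obstacle'' you flag --- uniformity of the Laplace asymptotic for $N$ across the coalescing maximizers near $\eta=q$ --- is actually not an obstacle for your argument: an \emph{upper} bound $N\lesssim\tau^{-1/2}e^{2\tau b^*(\eta)}$ is straightforward since $b''(\lambda(\eta))=3\lambda(\eta)^2+p\ge -2p>0$ uniformly, the second (non-dominant) local maximum only adds another term bounded by $\tau^{-1/2}$, and the tail contribution is exponentially small; the delicate uniformity is needed only for the matching lower bound on $N$, which you never use. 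You also implicitly take $q=0$ in the $x=-r$ case; replacing $\eta^*=0$ by $\eta^*=q$ throughout fixes this.
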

Since these theorems show that the nature of the singular set for
the Szeg\"{o} kernel can be different in the non-pseudoconvex case
from what has been observed in the pseudoconvex case, we summarize
these differences in the following:
\begin{cor}
If the domain $\Omega$ is not pseudoconvex, there may be points on
the diagonal of the boundary at which the Szeg\"o kernel is {\it
not} singular and points {\it off the diagonal} at which the
kernel {\it is} singular.
\end{cor}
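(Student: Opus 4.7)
The corollary is a direct consequence of combining Theorems \ref{theorem: absolute convergence} and \ref{theorem: x=-r singularity}; my plan is simply to exhibit, in each of the two categories, a specific pair of points that witnesses the claimed behavior.

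For a boundary-diagonal point at which $S$ is not singular, I take any $z_0=(x_0+iy_0,\,t_0+ib(x_0))\in\partial\Omega$ with $|x_0|<\sqrt{-p}$; for concreteness, $z_0=(0,0)$ works since $b(0)=0$. From the definition \eqref{sigma} of $\Sigma$, the pair $(z_0,z_0)$ satisfies $x=r=x_0$ with $|x_0|<\sqrt{-p}$, so $(z_0,z_0)\in(\overline{\Omega}\times\overline{\Omega})\setminus\Sigma$. Theorem \ref{theorem: absolute convergence} therefore provides an open neighborhood of $(z_0,z_0)$ on which the integrals in \eqref{Szego kernal inside} and \eqref{derivatives of S} are absolutely convergent, which implies that $S$ extends smoothly across $(z_0,z_0)$ and, in particular, is finite there. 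Thus the boundary-diagonal point $(z_0,z_0)$ witnesses the first assertion.

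For an off-diagonal singular point, I take $x_0=\sqrt{-p}$ and $r_0=-\sqrt{-p}$, so that $|x_0|=|r_0|=\sqrt{-p}$ and hence the pair $[(x_0+i0,\,ib(x_0)),(r_0+i0,\,ib(r_0))]$ lies in $\Sigma$ by the second clause of \eqref{sigma}. Since $x_0\ne r_0$, this pair lies off the boundary diagonal, and Theorem \ref{theorem: x=-r singularity} asserts that $\mathcal{S}$ is infinite there, giving the second assertion.

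The main obstacle is not at the level of the corollary at all, but in the two preceding theorems that it summarizes: establishing absolute convergence (via the Legendre-transform control on $N(\eta,\tau)$) and proving divergence on $\Sigma$ are where all the real work lies. At the corollary level, the only verification required is the routine check that the two exhibited pairs of points fall on the correct side of the partition defined by $\Sigma$, which is immediate from \eqref{sigma}.
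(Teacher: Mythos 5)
Your proposal is correct and matches the paper's reasoning precisely: the paper states the corollary as an immediate summary of Theorems~\ref{theorem: absolute convergence} and~\ref{theorem: x=-r singularity} without a separate written proof, and your choice of witnesses (a diagonal boundary point with $|x_0|<\sqrt{-p}$ lying outside $\Sigma$, and the off-diagonal pair $x_0=\sqrt{-p}$, $r_0=-\sqrt{-p}$ lying in $\Sigma$) is exactly the verification the theorems invite. The only thing to double-check, which you did, is that these points land on the correct side of the partition defined by \eqref{sigma}.
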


An understanding of the Szeg\"o kernel requires sharp estimates of
the integral $N(\eta,\tau)$.  For fixed $\eta,\tau$, this is an
integral of the form $\int_{-\infty}^\infty
e^{\rho(\lambda)}\,d\lambda$, where $\rho$ satisfies
$\lim_{|\lambda| \to \infty} \rho(\lambda) = -\infty$. The
heuristic principle that guides the analysis of such integrals is
that the main contribution comes from a neighborhood of the
point(s) at which the exponent attains its global maximum. If
$\lambda_0$ is such a point,
$$\int_{-\infty}^\infty e^{\rho(\lambda)} \,d\lambda = e^{\rho(\lambda_0)} \int_{-\infty}^\infty e^{\rho(\lambda)-\rho(\lambda_0)}\,d\lambda= e^{\rho(\lambda_0)} \int_{-\infty}^{\infty} e^{-p(x)} \,dx,$$
where $p(x):=\rho(\lambda_0)-\rho(x+\lambda_0)$ is non-negative
and vanishes to second order at the origin. In Section
\ref{Section:B}, we focus on understanding the ``main
contribution" to the integral $N$, while in Section 4, we focus on
uniform estimates on the integral that remains once we have taken
out this contribution. The theorems are established in Sections 5
and 6.

\section{\label{Section:B} Behavior of $\eta \lambda - b(\lambda)$}

Define
$$B_\eta(\lambda):= \eta \lambda - b(\lambda) $$
and consider $b^*(\eta)= \sup_{\lambda} B_\eta(\lambda)$, the
Legendre transform of $b$. Since $B_\eta$ is a polynomial of
degree 4 in $\lambda$ with negative leading coefficient, it tends
to $-\infty$ as $|\lambda| \to \infty$.  It follows that the
supremum is achieved at some $\lambda$ at which $B'_\eta$
vanishes; i.e., at some $\lambda$ satisfying $\lambda^3 + p
\lambda-(\eta-q)=0$. Note that this is a depressed cubic equation.
Therefore, by considering its discriminant, one finds:
\begin{prop}\label{prop: number of roots of B'}
\begin{enumerate}
\item If $4(-p)^3 \le 27(\eta - q)^2$, there is a single $\lambda$
at which $B'_\eta$ changes sign, hence $B_\eta$ has a single local
extremum, which is necessarily the location of the global maximum.

\item If $ 4 (-p)^3 > 27(\eta -q)^2$, $B'_\eta(\lambda)=0$ has
three distinct solutions.  Two correspond to local maxima of
$B_\eta$.  We label them $\lambda_- (\eta)$ and $\lambda_+(\eta)$,
with $\lambda_-(\eta)<\lambda_+(\eta)$.
\end{enumerate}
\end{prop}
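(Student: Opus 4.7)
The plan is to reduce everything to the elementary theory of real roots of a depressed cubic, then use the shape of a quartic with negative leading coefficient to identify which critical points are local maxima.

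First, I would compute $B_\eta'(\lambda) = \eta - b'(\lambda) = \eta - \lambda^3 - p\lambda - q$, so that critical points of $B_\eta$ are exactly the real roots of the depressed cubic
$$\lambda^3 + p\lambda - (\eta - q) = 0.$$
This is the cubic the statement refers to, with the usual two parameters taken to be $p$ (which is negative) and $-(\eta-q)$.

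Next, I would invoke the standard discriminant criterion. The depressed cubic $\lambda^3 + P\lambda + Q = 0$ has discriminant $-4P^3 - 27Q^2$; specializing to $P = p$ and $Q = -(\eta-q)$ gives $\Delta = -4p^3 - 27(\eta-q)^2 = 4(-p)^3 - 27(\eta-q)^2$, using $p<0$. The classical trichotomy then says: three distinct real roots iff $\Delta>0$, one simple real root iff $\Delta<0$, and a repeated real root (with a total of one or two distinct real roots) iff $\Delta=0$. Grouping the last two situations together yields case (1) in the proposition (a single $\lambda$ at which $B_\eta'$ \emph{changes sign}, since a double root of a cubic is a point where the cubic touches the axis but does not change sign), and case (2) exactly when $\Delta>0$.

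Finally, I would identify which critical points are local maxima in case (2). Since $b$ is a quartic with positive leading coefficient, $B_\eta(\lambda) = \eta\lambda - b(\lambda)$ tends to $-\infty$ as $|\lambda|\to\infty$. A smooth function with this end behavior and exactly three simple critical points must alternate max--min--max from left to right; therefore the smallest and largest critical points are local maxima, which we label $\lambda_-(\eta) < \lambda_+(\eta)$. In case (1), a single sign-change of $B_\eta'$ forces $B_\eta$ to increase then decrease, so the unique critical point is the global maximum.

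There is no real obstacle here; the only mild subtlety is handling the boundary case $\Delta = 0$, where the cubic has a repeated root, so $B_\eta'$ vanishes at two points but changes sign only at one. This is exactly why the statement groups $\Delta \le 0$ together as case (1) by phrasing it in terms of sign changes of $B_\eta'$ rather than in terms of the number of zeros.
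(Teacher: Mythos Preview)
Your proposal is correct and follows exactly the route the paper indicates: the paper simply notes that $B_\eta'(\lambda)=0$ is the depressed cubic $\lambda^3+p\lambda-(\eta-q)=0$ and appeals to its discriminant, without writing out the details you supply. Your treatment of the boundary case $\Delta=0$ and the max--min--max alternation is a useful elaboration of what the paper leaves implicit.
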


The next propositions contain more specific information about the
location(s) of the global maximum of $B_\eta$.
\begin{prop}\label{prop: aux function g}
Let $g(\lambda)=-(\lambda^3 + p\lambda)$.  Then
\begin{enumerate}
\item $g$ is positive on $(-\infty, -\sqrt{-p})$ and $(0,
\sqrt{-p})$, and g is negative on $(-\sqrt{-p}, 0)$ and
$(\sqrt{-p}, \infty)$.

\item $g$ increases on $\left(-\sqrt{-p/3}, \sqrt{-p/3} \right)$
and decreases on $(-\infty, -\sqrt{-p/3})$ and $(\sqrt{-p/3},
\infty)$.
\end{enumerate}
\end{prop}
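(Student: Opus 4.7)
The proposition is a purely elementary statement about the real cubic $g(\lambda)=-(\lambda^{3}+p\lambda)$ with $p<0$, so the plan is simply to factor the cubic and its derivative, then read off signs. Since $p<0$, the quantities $\sqrt{-p}$ and $\sqrt{-p/3}$ are real and positive, and both parts of the proposition will follow immediately from sign analysis on the four (respectively three) intervals into which the real line is cut by the relevant roots.

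For part (1), I would write
\[
g(\lambda)=-\lambda(\lambda^{2}+p)=-\lambda(\lambda-\sqrt{-p})(\lambda+\sqrt{-p}),
\]
a product of three linear factors with real roots $-\sqrt{-p}$, $0$, $\sqrt{-p}$. A routine sign chart on the four intervals determined by these roots gives exactly the pattern claimed: positive on $(-\infty,-\sqrt{-p})$ and on $(0,\sqrt{-p})$, negative on $(-\sqrt{-p},0)$ and on $(\sqrt{-p},\infty)$. One can also simply check a test point in each interval (e.g., evaluate $g$ at $\pm\sqrt{-p}/2$ and at $\pm 2\sqrt{-p}$) to confirm.

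For part (2), differentiate to obtain
\[
g'(\lambda)=-(3\lambda^{2}+p)=-3\Bigl(\lambda-\sqrt{-p/3}\Bigr)\Bigl(\lambda+\sqrt{-p/3}\Bigr).
\]
Since the leading coefficient in $\lambda^{2}$ is $-3<0$ and the roots of $g'$ are $\pm\sqrt{-p/3}$, we have $g'(\lambda)>0$ for $\lambda\in(-\sqrt{-p/3},\sqrt{-p/3})$ and $g'(\lambda)<0$ on the two unbounded intervals. Monotonicity then follows from the mean value theorem on each of the three intervals, which is exactly the second assertion.

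There is no real obstacle here; the only thing to be careful about is keeping track of signs given that $p<0$, so that $-p>0$ and the square roots are genuine real numbers. Nothing from Proposition~\ref{prop: number of roots of B'} or the earlier machinery is needed; the statement is introduced purely as a lemma about the auxiliary polynomial $g$, which will subsequently be used to compare the sizes of $B_{\eta}(\lambda_{-}(\eta))$ and $B_{\eta}(\lambda_{+}(\eta))$ in the two-critical-point regime.
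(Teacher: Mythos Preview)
Your proof is correct and is exactly the elementary calculus argument the paper has in mind; the paper itself writes only ``The proof is simple calculus and is omitted.'' Your factorizations of $g$ and $g'$ and the resulting sign analysis are precisely what that sentence is abbreviating.
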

\begin{proof}
The proof is simple calculus and is omitted.
\end{proof}
\begin{prop} \label{prop: maxima of B} Let $B_\eta(\lambda)=\eta \lambda - b(\lambda)$, with
$b$ as in \eqref{our b's}.
\begin{enumerate}
\item[(i)] If $\eta - q = 0$, $\lambda_-(\eta)=-\sqrt{-p}$,
$\lambda_+(\eta)=\sqrt{-p}$, and
$B_\eta(\lambda_-(\eta))=B_\eta(\lambda_+(\eta))$.  In other
words, the global maximum of $B_\eta$ is achieved at two distinct
points.

\item[(ii)] If $0 < \eta - q < \left(\frac{4(-p)^3}{27}
\right)^{\frac{1}{2}}$,
$$-\sqrt{-p} < \lambda_-(\eta) < 0 < \sqrt{-p}<\lambda_+(\eta) $$
and $B_\eta(\lambda_+)> B_\eta(\lambda_-)$.

\item[(iii)] If $\left(\frac{4(-p)^3}{27} \right)^{\frac{1}{2}}\le
\eta - q$, $B_\eta$ has a single local (hence global) maximum at
$\lambda_+(\eta)>\sqrt{-p}$, and $\lambda_+(\eta) \sim
\eta^{\frac{1}{3}}$ as $\eta \to \infty$.

\item[(iv)] If $-\left(\frac{4(-p)^3}{27} \right)^{\frac{1}{2}}<
\eta - q < 0$,
$$\lambda_-(\eta)< -\sqrt{-p} < 0 < \lambda_+(\eta)< \sqrt{-p} $$
and $B_\eta (\lambda_-)>B_\eta(\lambda_+)$.

\item[(v)] If $\eta - q < -\left(\frac{4(-p)^3}{27}
\right)^{\frac{1}{2}}<0$, $B_\eta$ has a single local (hence
global) maximum at $\lambda_-(\eta)<-\sqrt{-p}$, and
$\lambda_-(\eta) \sim \eta^{\frac{1}{3}}$ as $\eta \to -\infty$.

\end{enumerate}
\end{prop}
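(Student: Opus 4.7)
The plan is to reduce the whole analysis to a comparison of the graph of $g$ with horizontal lines. Observe that
\begin{equation*}
B'_\eta(\lambda) = \eta - b'(\lambda) = -(\lambda^3 + p\lambda) + (\eta - q) = g(\lambda) + (\eta - q),
\end{equation*}
so the critical points of $B_\eta$ are exactly the solutions of $g(\lambda) = -(\eta - q)$. Since $B''_\eta = -g'$, such a critical point is a local maximum of $B_\eta$ precisely where $g$ is decreasing, i.e.\ by Proposition \ref{prop: aux function g}(2), on $(-\infty, -\sqrt{-p/3}) \cup (\sqrt{-p/3}, \infty)$. Because $g$ is odd, its local extreme values $\mp M$ at $\pm\sqrt{-p/3}$ satisfy $M = (4(-p)^3/27)^{1/2}$, which is the discriminant threshold appearing in the statement.

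For (i), the equation $g(\lambda) = 0$ has roots $0, \pm\sqrt{-p}$ by Proposition \ref{prop: aux function g}(1); the outer two lie on the decreasing branches, so $\lambda_\pm = \pm\sqrt{-p}$. Moreover $B_q(\lambda) = -\tfrac{1}{4}\lambda^4 - \tfrac{1}{2}p\lambda^2$ is even, so the two maximum values agree. For (iii), since $-(\eta - q) \le -M$, the horizontal line $y = -(\eta - q)$ does not meet $g$ on $(-\infty, -\sqrt{-p/3})$ (whose range is $(-M, +\infty)$) and meets $g$ once on $(\sqrt{-p/3}, \infty)$. Because $g(\sqrt{-p}) = 0 > -(\eta - q)$ and $g$ is decreasing thereafter, the intersection $\lambda_+$ exceeds $\sqrt{-p}$. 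Dividing $\lambda_+^3 + p\lambda_+ = \eta - q$ by $\lambda_+^3$ and noting $\lambda_+ \to \infty$ gives $\lambda_+(\eta) \sim \eta^{1/3}$. Case (v) is the mirror image under $(\eta, \lambda) \mapsto (2q - \eta, -\lambda)$.

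For (ii), the horizontal line $y = -(\eta - q) \in (-M, 0)$ crosses $g$ three times; the two crossings on the decreasing branches sit in $(-\sqrt{-p}, 0)$ and $(\sqrt{-p}, \infty)$, respectively, giving $\lambda_-$ and $\lambda_+$. The only substantive claim is the value inequality $B_\eta(\lambda_+) > B_\eta(\lambda_-)$. I would establish this by the envelope theorem: since $\lambda_\pm(\eta)$ is critical for $B_\eta$,
\begin{equation*}
\frac{d}{d\eta} B_\eta(\lambda_\pm(\eta)) = \lambda_\pm(\eta),
\end{equation*}
so $F(\eta) := B_\eta(\lambda_+(\eta)) - B_\eta(\lambda_-(\eta))$ has derivative $F'(\eta) = \lambda_+(\eta) - \lambda_-(\eta) > 0$ on $(q - M, q + M)$, while $F(q) = 0$ by (i). Integrating yields $F > 0$ on $(q, q + M)$, which is (ii), and the identical calculation on $(q - M, q)$ gives $F < 0$, hence (iv).

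The main obstacle is exactly this value comparison in (ii) and (iv); it cannot be read from the graph of $g$, so the envelope-theorem shortcut seems essential, together with the implicit function theorem (which provides the smoothness of $\lambda_\pm(\eta)$ on the open interval $|\eta - q| < M$, where $B''_\eta(\lambda_\pm) = -g'(\lambda_\pm) \ne 0$).
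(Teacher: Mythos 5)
Your proof is correct. For the location statements in all five parts, and for the asymptotics in (iii) and (v), you are essentially reproducing the paper's argument: read off the roots of $g(\lambda) = -(\eta - q)$ on the decreasing branches of $g$, then for the asymptotics divide $\lambda_+^3 + p\lambda_+ = \eta - q$ by $\lambda_+^3$. The one place where you genuinely diverge from the paper is the value comparison $B_\eta(\lambda_+) \gtrless B_\eta(\lambda_-)$ in (ii) and (iv). The paper argues elementarily using the evenness of $-\tfrac{1}{4}\lambda^4 - \tfrac{1}{2}p\lambda^2$: for $\eta > q$ and $\lambda_- < 0$ one has $(\eta - q)\lambda_- < (\eta - q)(-\lambda_-)$, hence $B_\eta(\lambda_-) < B_\eta(-\lambda_-)$, and then $B_\eta(-\lambda_-) \le B_\eta(\lambda_+)$ because $\lambda_+$ is the maximizer of $B_\eta$ on the positive half-line. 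Your envelope-theorem argument --- $F(\eta) := B_\eta(\lambda_+(\eta)) - B_\eta(\lambda_-(\eta))$ has $F'(\eta) = \lambda_+(\eta) - \lambda_-(\eta) > 0$ by differentiating through the critical points, $F(q) = 0$ from (i), hence $F$ changes sign at $q$ --- is also valid, since on the open interval $|\eta - q| < (4(-p)^3/27)^{1/2}$ both critical points lie strictly outside $[-\sqrt{-p/3}, \sqrt{-p/3}]$, so $B''_\eta(\lambda_\pm) = -(3\lambda_\pm^2 + p) < 0$ and the implicit function theorem gives the needed differentiability. The paper's route is more elementary and self-contained (no implicit function theorem, no calculus along the parameter); yours is arguably cleaner in that it dispatches (ii) and (iv) simultaneously from a single sign-of-derivative observation, and it would generalize more readily to $b$ whose even part is not as explicit.
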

\begin{proof}
(i):  If $\eta=q$, then the local extrema of $B_\eta$ occur at
solutions to $g(\lambda)=0$. The three solutions are $\lambda =
-\sqrt{-p}, \, 0 , \,\sqrt{-p}$, and the local maximum is attained
at $\lambda = \pm \sqrt{-p}$. Since in this case $B_\eta(\lambda)
= -\frac{1}{4}\lambda^4-\frac{1}{2}p \lambda^2$ is even, the
conclusion follows.

(ii): By Proposition \ref{prop: number of roots of B'}, the upper
bound on $\eta$ guarantees that $B_\eta$ in fact has two local
maxima. Since $\eta - q
> 0$, for $\lambda \in [0, \sqrt{-p}]$,
$$B'_\eta(\lambda)=(\eta - q)-(\lambda^3 + p\lambda)=(\eta - q) + g(\lambda)>0.$$
Since $g$ is decreasing for $\lambda > \sqrt{\frac{-p}{3}}$,
$(\eta -q)+g(\lambda)=0$ has precisely one solution in
$(\sqrt{-p}, \infty)$, and it is the location of a local maximum
for $B_\eta$. We have named this point $\lambda_+(\eta)$.  On the
other hand, since $(\eta-q)+g(\lambda)$ is also positive on
$(-\infty, -\sqrt{-p}]$, the second local maximum
$\lambda_-(\eta)$ is in $(-\sqrt{-p}, 0)$.

Now, $\eta - q > 0$ and $\lambda_- <0$ imply $(\eta - q)\lambda_-
< (\eta - q)(-\lambda_-)$.  Since $\lambda_+$ is the location of
the global maximum of the restriction of $B_\eta$ to the positive
real axis, $B_\eta(\lambda_-)=(\eta -
q)\lambda_--\left(\frac{1}{4}\lambda_-^4 + \frac{1}{2}p
\lambda_-^2 \right) < (\eta - q)(-\lambda_-)-\left(
\frac{1}{4}(-\lambda_-)^4 +\frac{1}{2}p(-\lambda_-)^2\right)=
B_\eta(-\lambda_-) < B_\eta(\lambda_+)$. This proves (ii).

(iii): By Proposition \ref{prop: number of roots of B'}, we are in
the situation in which $B'_\eta(\lambda)=0$ has a single solution.
An identical argument to the one used to prove (ii) shows that the
solution, which we call $\lambda_+(\eta)$, satisfies $\sqrt{-p}<
\lambda_+(\eta)$.

We now prove the statement about the asymptotic behavior of
$\lambda_+(\eta)$. Since $\lambda_+^3 > \lambda_+^3 + p \lambda_+
= \eta -q$, $\lambda_+(\eta) \to \infty$ as $\eta \to \infty$.
Also, since $\lambda_+^3 = \eta - q - p \lambda_+ ,$ we have
$$1=\frac{\eta}{\lambda_+^3}+o(1).$$
Thus $\lambda_+^3 \sim \eta$, i.e., $\lambda_+^3 = \eta[1 + o(1)]$
as $\eta \to \infty$. It follows that $\lambda_+(\eta) \sim
\eta^{\frac{1}{3}}$ as $\eta \to \infty$.

The proofs of (iv) and (v) are almost identical to the proofs of
(ii) and (iii) and are omitted.
\end{proof}

Define a function
\begin{equation}\label{location of global max}
\lambda(\eta)=\begin{cases}\lambda_-(\eta) & \eta < q\\
\sqrt{-p} & \eta = q\\ \lambda_+(\eta) & \eta > q.
\end{cases}
\end{equation}
Thus for $\eta \ne q$, $\lambda(\eta)$ is the location of the global
maximum of $B_\eta$.  For $\eta = q$, the global maximum is achieved
at two points, $\pm\sqrt{-p}$.  Which of these we choose for the
value of $\lambda(q)$ is arbitrary.
\begin{prop} The function
$\eta \mapsto \lambda(\eta) $ maps $\mathbb{R}$ onto
$\mathbb{R}\setminus [-\sqrt{-p}, \sqrt{-p})$. Furthermore, it is
\begin{enumerate}
\item[(a)] differentiable on $\mathbb{R}\setminus
\{q\}$,
\item[(b)] continuous from the right at $\eta = q$, and
\item [(c)]increasing and injective on $\mathbb{R}$.
\end{enumerate}
\end{prop}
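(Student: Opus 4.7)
The plan is to exploit the implicit characterization $\lambda^3 + p\lambda = \eta - q$ of the critical points $\lambda_\pm(\eta)$, combined with the qualitative information already supplied by Proposition \ref{prop: maxima of B}.

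For differentiability on $\mathbb{R}\setminus\{q\}$, I would apply the implicit function theorem to $F(\lambda,\eta) = \lambda^3 + p\lambda - (\eta-q)$. Proposition \ref{prop: maxima of B} guarantees that at every $\eta \ne q$, $\lambda(\eta)$ lies outside $[-\sqrt{-p},\sqrt{-p}]$, so $\partial_\lambda F = 3\lambda^2 + p \ge -2p > 0$ there. Hence each branch is $C^1$ on its domain with
\[
\lambda'(\eta) = \frac{1}{3\lambda(\eta)^2 + p} > 0,
\]
which proves both differentiability away from $q$ and strict monotonicity on each of $(-\infty,q)$ and $(q,\infty)$. Right-continuity at $q$ follows by applying the same implicit function theorem at the point $(\lambda,\eta) = (\sqrt{-p},q)$, where $3(-p)+p = -2p > 0$: this produces a $C^1$ branch of solutions through $(\sqrt{-p},q)$ which, by uniqueness of the root in a neighborhood of $\sqrt{-p}$, must coincide with $\lambda_+$ for $\eta$ just above $q$; thus $\lambda_+(\eta)\to\sqrt{-p} = \lambda(q)$ as $\eta\to q^+$.

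For global monotonicity, observe that Proposition \ref{prop: maxima of B} forces $\lambda(\eta) < -\sqrt{-p}$ when $\eta < q$ and $\lambda(\eta) \ge \sqrt{-p}$ when $\eta \ge q$. Thus any $\eta_1 < q \le \eta_2$ satisfies $\lambda(\eta_1) < -\sqrt{-p} < \sqrt{-p} \le \lambda(\eta_2)$, which, together with the strict monotonicity of each piece, yields that $\lambda$ is strictly increasing, hence injective, on all of $\mathbb{R}$. Surjectivity onto $\mathbb{R}\setminus[-\sqrt{-p},\sqrt{-p})$ follows from the intermediate value theorem applied to each branch: $\lambda_+$ is continuous on $[q,\infty)$, starts at $\sqrt{-p}$, and by part (iii) of Proposition \ref{prop: maxima of B} satisfies $\lambda_+(\eta)\sim\eta^{1/3}\to\infty$, so it sweeps out $[\sqrt{-p},\infty)$; symmetrically, $\lambda_-$ is continuous on $(-\infty,q)$, tends to $-\infty$ as $\eta\to-\infty$ by part (v), and approaches $-\sqrt{-p}$ as $\eta\to q^-$ (again by the implicit function theorem at $(-\sqrt{-p},q)$), so it sweeps out $(-\infty,-\sqrt{-p})$.

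The main subtlety is pinning down the left-hand limit at $\eta = q$: because $\lambda$ itself jumps, one must verify that the root being tracked by $\lambda_-$ is precisely the one converging to $-\sqrt{-p}$ from below rather than some other root of the cubic. This is where the characterization in parts (i) and (iv) of Proposition \ref{prop: maxima of B} of $\lambda_-(\eta)$ as the root lying in $(-\infty,-\sqrt{-p}]$ is essential, combined with the continuity supplied by the implicit function theorem at $(-\sqrt{-p},q)$.
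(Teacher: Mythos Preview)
Your proof is correct and rests on the same idea as the paper's: the relation $\eta = q + \lambda^3 + p\lambda$ together with $3\lambda^2 + p > 0$ on the range of $\lambda(\cdot)$ yields the derivative formula $\lambda'(\eta) = (3\lambda(\eta)^2+p)^{-1}>0$ and all the stated properties. The paper organizes this slightly more economically by viewing $\eta$ as an explicit function of $\lambda$ on $(-\infty,-\sqrt{-p})\cup[\sqrt{-p},\infty)$ and noting that this restriction is a smooth bijection onto $\mathbb{R}$, so that injectivity, surjectivity, and differentiability of the inverse all follow at once from the inverse function theorem; your branch-by-branch assembly via the implicit function theorem reaches the same conclusions with a bit more bookkeeping.
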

\begin{proof}
The equation $\eta = q + \lambda^3 + p\lambda$ clearly expresses
$\eta$ as a function of $\lambda$. Furthermore, the restriction of
this function to $(-\infty, -\sqrt{-p}) \cup [\sqrt{-p}, \infty)$
is easily seen to be one-to-one with image $\mathbb{R}$.  Thus its
inverse function is well-defined on $\mathbb{R}$ and maps this set
to $(-\infty, -\sqrt{-p}) \cup [\sqrt{-p}, \infty)$.  Since
$\lambda$ restricted to $\mathbb{R} \setminus \{q\}$ is the
inverse of a function which is smooth with non-vanishing
derivative on its (restricted) domain, $\lambda$ is itself
continuous and differentiable there, with derivative
$\lambda'(\eta)=\displaystyle{\frac{1}{3[\lambda(\eta)]^2+p}} >0.$
The proposition is established.
\end{proof}

\begin{cor}\label{cor: cont of b*}
The function $\eta \to b^{*}(\eta)$ is continuous on $\mathbb{R}$.
\end{cor}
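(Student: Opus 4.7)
The plan is to express $b^*$ in terms of the function $\lambda$ from \eqref{location of global max} and then transfer the continuity properties of $\lambda$ to $b^*$, handling the one potential trouble spot $\eta = q$ by hand.

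First I would observe that for every $\eta \in \mathbb{R}$, the supremum defining $b^*(\eta)$ is attained at $\lambda(\eta)$, so
\[
b^*(\eta) = B_\eta(\lambda(\eta)) = \eta\,\lambda(\eta) - b(\lambda(\eta)).
\]
Away from $\eta = q$, the previous proposition says $\lambda$ is differentiable (in particular continuous), and since $b$ is a polynomial, $b^*$ is continuous on $\mathbb{R}\setminus\{q\}$ as the composition and sum of continuous functions. So the only real work is at $\eta = q$.

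For the right limit, right-continuity of $\lambda$ at $q$ gives $\lambda(\eta) \to \sqrt{-p}$ as $\eta \to q^+$, so
\[
\lim_{\eta \to q^+} b^*(\eta) = q\sqrt{-p} - b(\sqrt{-p}) = B_q(\sqrt{-p}) = b^*(q).
\]
For the left limit, note that for $\eta < q$ we have $\lambda(\eta) = \lambda_-(\eta)$. As $\eta \to q^-$, the three roots of $B'_\eta$ converge to the roots $-\sqrt{-p},0,\sqrt{-p}$ of $B'_q$, and since $\lambda_-(\eta) < -\sqrt{-p/3}$ throughout this regime (cf.\ part (iv) of Proposition \ref{prop: maxima of B}), we must have $\lambda_-(\eta) \to -\sqrt{-p}$. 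Consequently
\[
\lim_{\eta \to q^-} b^*(\eta) = -q\sqrt{-p} - b(-\sqrt{-p}) = B_q(-\sqrt{-p}).
\]

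The crucial input that ties everything together is part (i) of Proposition \ref{prop: maxima of B}, which records the symmetry $B_q(-\sqrt{-p}) = B_q(\sqrt{-p})$. Combining this with the two one-sided limits gives $\lim_{\eta\to q} b^*(\eta) = b^*(q)$, completing the proof. The only step requiring even minor care is the left-limit identification of $\lambda_-(\eta)$, and that is immediate from the location information already extracted for $\lambda_-$; no new estimates are needed.
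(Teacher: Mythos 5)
Your argument is correct, but it takes a genuinely different route from the paper's. The paper dispatches the corollary in one line: $b^*$ is by definition a supremum of affine functions of $\eta$, hence convex on $\mathbb{R}$, and it is finite-valued because $b$ grows superlinearly; a finite convex function on an open interval is automatically continuous. Your proof instead proceeds by hand, writing $b^*(\eta) = \eta\,\lambda(\eta) - b(\lambda(\eta))$, importing continuity of $\lambda$ off $\{q\}$ from the preceding proposition, and then computing the two one-sided limits at $\eta = q$, where the crux is the symmetry $B_q(-\sqrt{-p}) = B_q(\sqrt{-p})$ from part (i) of Proposition \ref{prop: maxima of B}. Both proofs are sound. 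The paper's is shorter and more robust (it does not use any special structure of the quartic $b$ and would work verbatim for any $b$ whose Legendre transform is finite), while yours is more self-contained within the paper's own machinery and does not require invoking the convexity-implies-continuity fact. One small remark: in the left-limit step, the convergence $\lambda_-(\eta)\to -\sqrt{-p}$ as $\eta\to q^-$ follows most directly from the mapping proposition just before the corollary, which shows that $\lambda$ is an increasing bijection of $(-\infty,q)$ onto $(-\infty,-\sqrt{-p})$; your root-convergence argument is fine too but is a touch more than is needed.
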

\begin{proof} This is immediate since $b^*$ is a real-valued
function which is convex on all of $\mathbb{R}$.

%Since $\lambda$ is continuous on $\mathbb{R}\setminus \{q\}$, we
%need only consider the behavior at $q$.  The first statement then
%follows since $\lim_{\eta \to q^-} \lambda(\eta)=-\sqrt{-p}$ and
%$\lim_{\eta \to q^+} \lambda(\eta)=\sqrt{-p} = |\lambda(q)|$.

%For the second statement, note
%$$b^*(\eta)=(\eta - q)\lambda(\eta) - \frac{1}{4}[\lambda(\eta)]^4-\frac{p}{2}[\lambda(\eta)]^2.$$
%Since $\eta - q$ and $\lambda(\eta)$ both switch from negative to
%positive at $\eta = q$,
%$$b^*(\eta)=|\eta-q||\lambda(\eta)|-\frac{1}{4}|\lambda(\eta)|^4-\frac{p}{2}|\lambda(\eta)|^2.$$
%The continuity of $b^*$ follows.
\end{proof}

\section{Estimates on $\int_{-\infty}^\infty e^{-p(x)}\,dx$} %From Research\Pramanik\July07.tex
\subsection{Definitions and Notation} Let $p$ be a real polynomial
of even degree with positive leading coefficient. We are
interested in estimates on
\begin{equation}\label{full integral}
\int_{-\infty}^\infty e^{-p(x)}\,dx
\end{equation}
which are uniform in the coefficients of $p$. If $p$ is convex
(i.e., if $p''(x)\ge 0$ for all $x$) with $p(0)=p'(0)=0$, we know
that
\begin{equation}\label{desired estimate}
\int_{-\infty}^\infty e^{-p(x)}\,dx \approx |\{\,x:p(x)\le 1 \,\}|,
\end{equation}
where we use the notation $A \approx B$ to mean that there exists
a constant $c$ such that $\displaystyle{cB \le A \le
\frac{1}{c}B}$. When such inequalities hold, we say that $A$ and
$B$ are {\it comparable}. It will be understood whenever this
notation is used that the underlying constant $c$ is independent
of all important parameters. Thus in our case it is always
independent of the coefficients of the polynomial $p$ and depends
only, perhaps, on the degree of $p$.

Our goal is to extend the estimate \eqref{desired estimate} to the
situation in which $p$ is a fourth-degree polynomial with positive
leading coefficient. By translating, shifting, and reflecting about
the $y$-axis if necessary, we can arrange it so that the (not
necessarily unique) global minimum of the polynomial is zero and
occurs at $x=0$, and so that $p$ is convex for all $x \le 0$. Since
$p''$ has degree 2, if $p$ fails to be convex on all of
$\mathbb{R}$, there is a single interval on which $p''$ is negative.

Thus suppose $p''$ has zeros at $x=A$ and $x=A+C$ where $A,C >0$.
Then there exists $B>0$ so that
\begin{equation}\label{p''}
p''(x)=B(x-A)(x-(A+C)).
\end{equation}
\begin{remark}
One checks easily that if $A=0$, $p$ can not have its global
minimum at $0$ unless $C=0$ as well.  In this case, we would have
$p(x)=Bx^4$, which is convex. Furthermore, if $A> 0$ but $C=0$,
$p''$ is never negative, hence $p$ is convex.
\end{remark}
If we anti-differentiate \eqref{p''} twice, using the assumption
that $p(0)=p'(0)=0$, we find
\begin{equation}\label{p in terms of A, B, C}
p(x)=\frac{B}{12}x^2[x^2-2(2A+C)x+6A(A+C)].
\end{equation}
In the analysis that follows, it will be essential to know what
relationship, if any, exists between $A$ and $C$.  Thus write
$C=\alpha A$ for $\alpha > 0$.  Then
\begin{equation}\label{p in terms of A, B, alpha}
p(x)=\frac{B}{12}x^2[x^2-2A(2+\alpha)x+6A^2(1+\alpha)].
\end{equation}

\begin{prop}\label{prop: condition on C for 0 to be global min} Let $p$ be as in \eqref{p in terms of A, B, alpha}, with $A,B,\alpha>0$.  $p$ is
non-negative if and only if
\begin{equation}\label{condition on C for 0 to be global min}
0 < \alpha \le 1+\sqrt{3}.
\end{equation}
\end{prop}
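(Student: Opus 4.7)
The plan is to reduce the non-negativity of $p$ to a condition on a quadratic factor, then translate that to a discriminant inequality that can be solved explicitly for $\alpha$.

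First I would observe that since $A, B > 0$, the prefactor $\frac{B}{12}x^2$ is non-negative for all $x$ and vanishes only at $x=0$. Hence $p(x) \ge 0$ for all $x \in \mathbb{R}$ if and only if the quadratic
\begin{equation*}
Q(x) := x^2 - 2A(2+\alpha)x + 6A^2(1+\alpha)
\end{equation*}
satisfies $Q(x) \ge 0$ for all $x \ne 0$, and by continuity this is equivalent to $Q(x) \ge 0$ for all $x \in \mathbb{R}$.

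Next, since $Q$ has positive leading coefficient, $Q \ge 0$ on $\mathbb{R}$ is equivalent to its discriminant being non-positive. A direct computation gives
\begin{equation*}
\operatorname{disc}(Q) = 4A^2(2+\alpha)^2 - 24A^2(1+\alpha) = 4A^2\bigl[\alpha^2 - 2\alpha - 2\bigr].
\end{equation*}
Since $A > 0$, the condition $\operatorname{disc}(Q) \le 0$ reduces to $\alpha^2 - 2\alpha - 2 \le 0$, whose roots are $1 \pm \sqrt{3}$. Thus the inequality holds exactly when $1 - \sqrt{3} \le \alpha \le 1 + \sqrt{3}$. Combined with the standing hypothesis $\alpha > 0$ (and noting $1 - \sqrt{3} < 0$), this yields the equivalence $0 < \alpha \le 1 + \sqrt{3}$ claimed in \eqref{condition on C for 0 to be global min}.

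There is essentially no hard step: the argument is a reduction to a quadratic discriminant. The only minor subtlety is the passage from $x^2 Q(x) \ge 0$ to $Q \ge 0$, which is handled by noting that $Q$ is continuous and $x^2$ vanishes only at a single point. Everything else is bookkeeping with the expression \eqref{p in terms of A, B, alpha}.
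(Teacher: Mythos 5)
Your proof is correct and follows essentially the same route as the paper: reduce to non-negativity of the quadratic factor $Q(x)=x^2-2A(2+\alpha)x+6A^2(1+\alpha)$ and then impose that its discriminant be non-positive, yielding $\alpha^2-2\alpha-2\le 0$ and hence $0<\alpha\le 1+\sqrt{3}$. The only difference is that you spell out the discriminant computation and the continuity step, which the paper leaves to the reader.
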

\begin{proof}
$p$ is non-negative if and only if the expression
$x^2-2A(2+\alpha)x+6A^2(1+\alpha)$ is non-negative for all $x$. The
conclusion follows by finding those positive $\alpha$ for which this
quadratic has non-positive discriminant.
\end{proof}

Next, we prove an inequality concerning the value of $p$ at its
inflection points:
\begin{prop}\label{prop: rel between p(A+C) and p(A)} If
$p(x)=\frac{B}{12}[x^4-2A(2+\alpha)x^3+6A^2(1+\alpha)x^2]$ and $0 <
\alpha \le 1+\sqrt{3}$, then there exists $c>0$ independent of $A$
and $B$ so that $p((1+\alpha)A)\ge p(A) \ge cBA^4$.
\end{prop}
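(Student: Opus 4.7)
The plan is to absorb the sizes $A$ and $B$ into a rescaling so that everything reduces to an elementary one-variable polynomial inequality in $\alpha$. First I would substitute $x = At$, so that
\[
p(At) = \frac{BA^4}{12}\, t^2\bigl[t^2 - 2(2+\alpha)t + 6(1+\alpha)\bigr].
\]
Writing $q(t)$ for the quartic in brackets (so $p(At) = \tfrac{BA^4}{12} q(t)/t^{-2}\cdot$\ldots more cleanly, $p(At) = \tfrac{BA^4}{12}\, t^2 \cdot r(t)$ with $r(t)$ the bracketed quadratic), the proposition becomes the pair of statements $t^2 r(t)\bigr|_{t=1+\alpha} \ge t^2 r(t)\bigr|_{t=1} \ge c'$ for some $c'>0$ independent of $A, B$.

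Next I would evaluate both sides directly. A one-line computation gives $r(1) = 3+4\alpha - 0 = 3+4\alpha$ (after combining $1 - 2(2+\alpha) + 6(1+\alpha)$), so $p(A) = \tfrac{BA^4}{12}(3+4\alpha) \ge \tfrac{1}{4}BA^4$ for every $\alpha > 0$, which establishes the second inequality with $c = 1/4$. Similarly, pulling a factor of $(1+\alpha)$ out of the bracket after setting $t = 1+\alpha$ gives
\[
q(1+\alpha) = (1+\alpha)^3 (3-\alpha),
\]
which is positive because Proposition \ref{prop: condition on C for 0 to be global min} forces $\alpha \le 1+\sqrt 3 < 3$.

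The substantive step is then the inequality $(1+\alpha)^3(3-\alpha) \ge 3+4\alpha$ on $0 < \alpha \le 1+\sqrt 3$. Expanding the left side and subtracting $3+4\alpha$ collapses this to
\[
\alpha\bigl(4 + 6\alpha - \alpha^3\bigr) \ge 0,
\]
so it suffices to show the cubic $h(\alpha) := 4 + 6\alpha - \alpha^3$ is non-negative on $[0, 1+\sqrt 3]$. I would establish this by noting $h'(\alpha) = 6 - 3\alpha^2$, so $h$ is increasing on $[0,\sqrt 2]$ and decreasing on $[\sqrt 2, \infty)$; together with $h(0) = 4 > 0$ and the direct check $h(1+\sqrt 3) = 4 + 6 + 6\sqrt 3 - (10 + 6\sqrt 3) = 0$, this gives $h \ge 0$ throughout $[0, 1+\sqrt 3]$.

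The only real obstacle is conceptual: one must recognize that the endpoint $\alpha = 1+\sqrt 3$ produced by the previous proposition coincides precisely with the largest positive root of $h$, so the proposition is sharp exactly at the boundary of the allowed range. This is transparent from the factorization $\alpha^3 - 6\alpha - 4 = (\alpha+2)\bigl(\alpha - (1-\sqrt 3)\bigr)\bigl(\alpha - (1+\sqrt 3)\bigr)$; no further analytic difficulty arises, and the computations above are entirely routine.
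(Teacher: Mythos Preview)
Your argument is correct and is essentially the same as the paper's: both compute $p(A)=\tfrac{BA^4}{12}(3+4\alpha)$ and $p((1+\alpha)A)=\tfrac{BA^4}{12}(3+8\alpha+6\alpha^2-\alpha^4)$, reduce the first inequality to $4\alpha+6\alpha^2-\alpha^4\ge 0$ on $(0,1+\sqrt{3}]$, and verify it via the root structure (the paper lists the roots $-2,0,1\pm\sqrt{3}$ and checks a sign; you factor out $\alpha$ and handle the cubic by monotonicity and the endpoint value, then record the same factorization). The rescaling $x=At$ and the intermediate factorization $q(1+\alpha)=(1+\alpha)^3(3-\alpha)$ are cosmetic conveniences, not a different method.
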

\begin{proof}
We have
$p((1+\alpha)A)=\frac{BA^4}{12}(3+8\alpha+6\alpha^2-\alpha^4)$ and
$p(A)=\frac{BA^4}{12}(3+4\alpha)$. The lower bound on $p(A)$
follows immediately since for $0 < \alpha \le 1+\sqrt{3}$,
$3+4\alpha$ is bounded below by a positive constant.

Observe that
$p((1+\alpha)A)-p(A)=\frac{B}{12}A^4(4\alpha+6\alpha^2-\alpha^4)$.
One confirms easily that $\alpha=-2, 0, 1\pm\sqrt{3}$ are roots.
%To find the fourth root, note that
%$$4(-1)+6(-1)^2-(-1)^4=1>0 $$
%whereas
%$$4(-.5)+6(-.5)^2-(-.5)^4=-2+\frac{3}{2}-\frac{1}{16}<0,$$
%and so the fourth root is in the interval $(-1,-.5)$.
Finally, since at $\alpha=1$,
$p((1+\alpha)A)-p(A)=\frac{B}{12}A^4(4+6-1)>0$, we conclude that
this difference is positive for all $0<\alpha<1+\sqrt{3}$.  This
proves the proposition.
\end{proof}

A convex polynomial clearly has only one local extremum, which is
necessarily the location of the global minimum.  For non-convex
$p$, however, it is possible that $p$ has other extrema. More
specifically, if $p$ is a fourth-degree polynomial, $p'$ is a
polynomial of degree three, hence it has either a single real root
or three real roots (counting multiplicities).  We have the
following:
\begin{prop}\label{condition on C for 3 extrema}
Let $p(x)=\frac{B}{12}[x^4-2A(2+\alpha)x^3+6A^2(1+\alpha)x^2]$,
with $A,B>0$ and $0<\alpha\le 1+\sqrt{3}$.  Then $p'$ has three
real roots if and only if $2 \le \alpha \le 1+\sqrt{3}$.
\end{prop}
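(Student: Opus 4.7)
The plan is to compute $p'$ explicitly, factor, and reduce the question to a discriminant analysis for a single quadratic.

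First I would differentiate the expression for $p$ and immediately factor out an $x$, obtaining
\[
p'(x)=\frac{B}{6}\,x\bigl[2x^{2}-3A(2+\alpha)x+6A^{2}(1+\alpha)\bigr].
\]
Since $x=0$ is always a root of $p'$, the question of whether $p'$ has three real roots (counting multiplicities) reduces to whether the quadratic factor $Q(x):=2x^{2}-3A(2+\alpha)x+6A^{2}(1+\alpha)$ has real roots. Note that $Q(0)=6A^{2}(1+\alpha)>0$, so $x=0$ is never a root of $Q$; hence the total multiplicity count from the two factors is genuinely determined by $Q$'s discriminant.

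Next I would compute the discriminant of $Q$:
\[
\Delta = 9A^{2}(2+\alpha)^{2}-48A^{2}(1+\alpha)=A^{2}\bigl[9\alpha^{2}-12\alpha-12\bigr]=3A^{2}(3\alpha+2)(\alpha-2).
\]
For $\alpha>0$ the factor $3\alpha+2$ is positive, so $\Delta\ge 0$ if and only if $\alpha\ge 2$. Thus $Q$ has two real roots (equal when $\alpha=2$, distinct when $\alpha>2$) precisely in this range, and in that case $p'$ has three real roots in total. Conversely, for $0<\alpha<2$ we have $\Delta<0$ and $Q$ has no real roots, so $p'$ has only the single real root $x=0$.

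Combining this with the standing hypothesis $0<\alpha\le 1+\sqrt{3}$ from Proposition \ref{prop: condition on C for 0 to be global min}, we obtain the stated equivalence $2\le\alpha\le 1+\sqrt{3}$. There is really no serious obstacle here: the argument is a one-line factorization followed by an elementary discriminant calculation, and the only point requiring care is to verify that $x=0$ is not a root of $Q$ so that the multiplicity count is not corrupted by a collision between the two factors.
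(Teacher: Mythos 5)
Your proof is correct and is essentially the same as the paper's: differentiate, factor out $x$, and decide the number of real roots of the remaining quadratic by computing its discriminant $3A^2(3\alpha+2)(\alpha-2)$ and intersecting with the standing constraint $0<\alpha\le 1+\sqrt{3}$. The extra remark that $Q(0)\neq 0$ is a harmless additional observation that the paper omits.
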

\begin{proof}
We find that $p'(x)=
\frac{B}{12}x[4x^2-6A(2+\alpha)x+12A^2(1+\alpha)]$. This has three
real roots if and only if $3A^2(3\alpha^2 -4 \alpha -4)\ge 0$.
This occurs if and only if $\alpha\le-\frac{2}{3}$ or $\alpha \ge
2$. Since we have assumed $0 < \alpha \le 1+\sqrt{3},$ the
conclusion follows.
\end{proof}

To analyze the integral \eqref{full integral}, we begin by writing
it as a sum:
\begin{eqnarray}
\int_{-\infty}^\infty e^{-p(x)}\,dx &=&\int_{-\infty}^0
e^{-p(x)}\,dx+\int_0^A
e^{-p(x)}\,dx\nonumber\\
& &{}+\int_{A}^{(1+\alpha)A}
e^{-p(x)}\,dx+\int_{(1+\alpha)A}^\infty e^{-p(x)}\,dx\nonumber\\
&=&I+II+III+IV \label{four pieces}.
\end{eqnarray}
Observe that $p$ is convex on the intervals of integration for $I$,
$II$, and $IV$. Obtaining sharp estimates on these integrals
requires the results of the next subsection.

\subsection{Some Estimates on Functions on Intervals of Convexity}
We will use the following results repeatedly.  The first gives the
size of the integral of $e^{-p}$ over any interval on which $p$ is
convex. It uses a modification of an argument in Halfpap, Nagel,
and Wainger \cite{HNW:09} proving an analogous estimate if $p$ is
convex on all of $\mathbb{R}$.
\begin{lemma}\label{lemma: mod of HNW} Let $p$ be a polynomial satisfying $\lim_{|x|\to\infty}
p(x)=\infty$.
\begin{enumerate}
\item \label{p' pos and incr} Suppose $p'$ is positive and
increasing on an interval $(x_0,x_f)$, where $x_f$ may equal
$+\infty$.  Suppose further that in the case in which $x_f<\infty$,
$p(x_f)\ge p(x_0)+1$. Then
\begin{equation}
\int_{x_0}^{x_f} e^{-p(x)}\,dx \approx
e^{-p(x_0)}|\{x\in(x_0,x_f):p(x_0)<p(x)<p(x_0)+1\}|.
\end{equation}

\item\label{p' neg and incr} Suppose $p'$ is negative and
increasing on an interval $(x_f,x_0)$, where $x_f$ may equal
$-\infty$.  Suppose further that in the case in which $x_f>-\infty$,
$p(x_f)\ge p(x_0)+1$. Then
\begin{equation}
\int_{x_f}^{x_0} e^{-p(x)}\,dx \approx
e^{-p(x_0)}|\{x\in(x_f,x_0):p(x_0)<p(x)<p(x_0)+1\}|.
\end{equation}

\item \label{p' incr, p(T_f) small}  Suppose $p'$ is (i) positive
and increasing on $I=(x_0,x_f)$ with $x_f<\infty$ or (ii) negative
and increasing on $I=(x_f,x_0)$ with $x_f>-\infty$. Suppose further
that $p(x_0) < p(x_f) < p(x_0)+1$. Then
\begin{equation}
\int_I e^{-p(x)}\,dx \approx e^{-p(x_0)}|x_f - x_0|.
\end{equation}
\end{enumerate}
\end{lemma}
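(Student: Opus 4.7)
The plan is to adapt the argument of Halfpap--Nagel--Wainger, which proves the analogous bound when $p$ is convex on all of $\mathbb{R}$, to the present setting where convexity is assumed only on the interval of integration. Parts (2) and (3) reduce to part (1) by a reflection and a one-line direct estimate respectively, so the content is concentrated in part (1). I would begin by setting $a := p(x_0)$ and partitioning $(x_0, x_f)$ into the level-set pieces
\[ I_k := \{x \in (x_0, x_f) : a + k \leq p(x) < a + k + 1\}, \qquad k = 0, 1, 2, \ldots. \]
Strict monotonicity of $p$ on $(x_0, x_f)$ (from $p' > 0$) makes each $I_k$ a subinterval, and together they exhaust $(x_0, x_f)$.

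The geometric heart of the argument is that the sequence $\{|I_k|\}$ is non-increasing in $k$. To see this, I would pass to the inverse function $\phi(y) := p^{-1}(a+y)$; then $\phi'(y) = 1/p'(\phi(y))$, and since $p'$ is increasing and $\phi$ is increasing, $\phi'$ is non-increasing, so $\phi$ is concave. Concavity gives $|I_k| = \phi(k+1) - \phi(k)$ non-increasing in $k$. On each $I_k$ the integrand satisfies $e^{-a-k-1} \leq e^{-p(x)} \leq e^{-a-k}$, so
\[ e^{-a}\sum_k e^{-k-1}|I_k| \; \leq \; \int_{x_0}^{x_f} e^{-p(x)}\,dx \; \leq \; e^{-a}\sum_k e^{-k}|I_k|. \]
Using $|I_k| \leq |I_0|$, the upper sum is at most $|I_0|\,e^{-a}/(1-e^{-1})$, while the $k=0$ term alone gives a lower bound of $|I_0|\,e^{-a}/e$. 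Thus the integral is comparable to $e^{-p(x_0)}|I_0|$, which is precisely the claimed estimate.

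Part (2) I would deduce by the substitution $u = -x$, which converts the negative-increasing case into the positive-increasing case of part (1). Part (3) is immediate: since $0 \leq p(x) - p(x_0) < 1$ on $I$, the integrand lies in $[e^{-p(x_0)-1}, e^{-p(x_0)}]$ throughout, and the two-sided bound falls out by integrating these constants over $I$. The main delicate point I anticipate is that when $x_f < \infty$ the final piece $I_K$ of the decomposition in part (1) may be truncated by the endpoint and therefore strictly shorter than $\phi(K+1) - \phi(K)$; I would argue this only strengthens the inequality $|I_K| \leq |I_0|$ needed for the upper bound, and the hypothesis $p(x_f) \geq p(x_0) + 1$ is placed precisely to prevent $|I_0|$ itself from being truncated, which would be fatal to the lower bound.
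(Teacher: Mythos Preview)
Your proof is correct and follows essentially the same strategy as the paper: both decompose $(x_0,x_f)$ into the level-set intervals $I_k=\{x:p(x_0)+k\le p(x)<p(x_0)+k+1\}$, use convexity on the interval to control $|I_k|$ in terms of $|I_0|$, and sum a geometric series. The only cosmetic difference is that the paper bounds $x_{j+1}-x_1\le j(x_1-x_0)$ directly from the inequalities $p'(x_1)(x_{j+1}-x_1)\le j$ and $p'(x_1)(x_1-x_0)\ge 1$, whereas you pass to the concave inverse $\phi=p^{-1}$ to get the sharper (and slightly cleaner) statement that $|I_k|$ is non-increasing; both lead to the same conclusion, and your handling of the truncated final piece and of parts (2) and (3) matches the paper's remark that ``the remaining parts follow in a similar manner.''
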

\begin{proof}
We sketch the proof of (1).  The remaining parts follow in a
similar manner.

Suppose $x_f < \infty$, and let $J$ be the largest positive
integer such that $p(x_f)\ge p(x_0)+J$.  Our hypotheses guarantee
that such a $J$ exists.  For each positive integer $j \le J$,
define $x_j$ to be the unique element of $(x_0, x_f)$ for which
$p(x_j) = p(x_0) + j$.  Clearly,
$$e^{-p(x_0)}(x_1 - x_0) \le \int_{x_0}^{x_f} e^{-p(x)}\,dx.$$
For the reverse inequality, observe that
\begin{eqnarray*}
\int_{x_0}^{x_f}  e^{-p(x)}\,dx&=&\sum_{j=0}^{J-1} \int_{x_j}^{x_{j+1}} e^{-p(x)}\,dx + \int_{x_J}^{x_f} e^{-p(x)}\,dx\\
&\le& \sum_{j=0}^{J-1} e^{-p(x_0)- j} (x_{j+1} - x_j)+e^{-p(x_0) -J}(x_f - x_J)\\
&\le& e^{-p(x_0)} \left[  (x_1 - x_0) + \sum_{j=1}^{J-1}
e^{-j}(x_{j+1} - x_1)   + e^{-J}(x_f - x_1) \right].
\end{eqnarray*}
We now estimate $x_{j+1} - x_1$ in terms of $x_1 - x_0$.
\begin{equation*}
j=p(x_{j+1}) - p(x_1) =\int_{x_1}^{x_{j+1}} p'(x)\,dx \ge
p'(x_1)(x_{j+1} - x_1).
\end{equation*}
Since $$p'(x_1)(x_1 - x_0) \ge \int_{x_0}^{x_1}p'(x) \,dx = 1,$$
we have $x_{j+1} - x_1 \le j(x_1 - x_0)$. A similar estimate holds
for $x_f - x_1$.  It follows that
$$\int_{x_0}^{x_f} e^{-p(x)} \,dx \lesssim (x_1 - x_0)e^{-p(x_0)}.$$
\end{proof}

\begin{lemma}[Bruna, Nagel, Wainger \cite{BrNagWa:88}]\label{lemma: Bruna, Nagel,
Wainger}  Let $p$ be a polynomial of degree $m$ satisfying
$p(0)=p'(0)=0$; i.e., $p(x)=\sum_{k=2}^m a_k x^k$.  If $p$ is
convex on an interval $[0,A]$, then there exists a constant $C_m$,
depending on $m$ but independent of $A$, such that
\begin{equation}\label{conclusion of BNW}
p(x)\ge C_m \sum_{k=2}^m |a_k| x^k \quad \text{for all $x\in
[0,A]$}.
\end{equation}
\end{lemma}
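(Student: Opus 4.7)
The plan is to reduce the statement first to the normalized case $A=1$, then to reduce the pointwise inequality to a single endpoint inequality, and finally to establish that endpoint inequality via a compactness argument on the finite-dimensional space of polynomials of degree at most $m$.

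The first step is to rescale. Setting $y = x/A$ and $\tilde p(y) = p(Ay) = \sum_{k=2}^m (a_k A^k) y^k$, one checks that $\tilde p$ is again a polynomial of degree $m$ with $\tilde p(0) = \tilde p'(0) = 0$, that $\tilde p$ is convex on $[0,1]$, and that the desired estimate $p(x) \ge C_m \sum_{k=2}^m |a_k| x^k$ is equivalent to $\tilde p(y) \ge C_m \sum_{k=2}^m |a_k A^k| y^k$. Hence it suffices to prove the lemma under the additional assumption $A = 1$.

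The second step is to reduce the pointwise inequality on $[0,1]$ to the endpoint inequality at $y = 1$. For any $y_0 \in (0,1]$, the polynomial $p$ is convex on $[0,y_0]$ as well (since $[0,y_0]\subset[0,1]$), so if the endpoint statement
\begin{equation*}
p(1) \ge C_m \sum_{k=2}^m |a_k|
\end{equation*}
is proved for every admissible $p$, then applying it after rescaling $[0,y_0] \to [0,1]$ yields $p(y_0) \ge C_m \sum_{k=2}^m |a_k| y_0^k$, which is exactly the pointwise conclusion.

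The third and main step is the endpoint inequality itself, proved by a normalization/compactness argument. Let $\mathcal{K}$ denote the set of real polynomials $p(t) = \sum_{k=2}^m a_k t^k$ that are convex on $[0,1]$ and satisfy $\sum_{k=2}^m |a_k| = 1$. Viewed as a subset of the finite-dimensional vector space of polynomials of degree at most $m$ vanishing to order $2$ at the origin, $\mathcal{K}$ is closed (the convexity condition is preserved under pointwise, hence coefficient-wise, limits) and bounded, hence compact. The evaluation map $p \mapsto p(1)$ is continuous on this space, so it attains a minimum value $C_m$ on $\mathcal{K}$. I claim $C_m > 0$. For suppose some $p \in \mathcal{K}$ satisfied $p(1) = 0$. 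Since $p$ is convex on $[0,1]$ with $p'(0) = 0$, its derivative is non-decreasing and thus non-negative on $[0,1]$, so $p$ is non-decreasing on $[0,1]$; combined with $p(0) = 0 = p(1)$ this forces $p \equiv 0$ on $[0,1]$. As a polynomial of degree at most $m$ vanishing on an interval, $p$ is identically zero, contradicting $\sum |a_k| = 1$. Hence $C_m > 0$, and by homogeneity (dividing an arbitrary admissible $p$ by $\sum |a_k|$) one obtains $p(1) \ge C_m \sum_{k=2}^m |a_k|$, completing the proof.

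There is no real obstacle; the only nonelementary ingredient is the compactness argument, and its force is that $C_m$ is produced abstractly rather than explicitly. This is acceptable because the statement only asserts existence of the constant, with dependence on $m$ alone.
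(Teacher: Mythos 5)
Your proof is correct, but note that the paper supplies no proof of this lemma at all: it is cited directly from Bruna, Nagel, and Wainger \cite{BrNagWa:88} and used as a black box. So there is no ``paper proof'' to compare against; you have provided a self-contained argument for an externally quoted result.

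Your argument is sound in all three steps. The rescaling to $A=1$ is clean since $\tilde p(y)=p(Ay)$ has coefficients $a_kA^k$ and the target inequality transforms covariantly. The reduction of the pointwise bound to the endpoint bound (rescale $[0,y_0]$ to $[0,1]$ and apply the endpoint estimate to $q(t)=p(y_0t)$) is exactly right. The compactness argument is the standard one: the set $\mathcal{K}$ of admissible normalized coefficient vectors is closed (the condition $\sum_{k\ge 2}k(k-1)a_k t^{k-2}\ge 0$ for each $t\in[0,1]$ is a closed condition, and intersections of closed sets are closed) and bounded in the $\ell^1$-sphere, evaluation at $1$ is linear hence continuous, and the minimum value is strictly positive because a polynomial convex on $[0,1]$ with $p(0)=p'(0)=p(1)=0$ must be non-decreasing with equal endpoint values, hence identically zero on $[0,1]$, hence the zero polynomial — contradicting the normalization. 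One harmless remark: your argument actually proves the statement for polynomials of degree \emph{at most} $m$ vanishing to second order at the origin, which is both necessary for the compactness step (limits can drop degree) and exactly what the paper uses.
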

This lemma is useful to us because it allows us to prove the
following:
\begin{prop}\label{prop: size of mu s.t. p(mu)=1}
Let $p$ be as in Lemma \ref{lemma: Bruna, Nagel, Wainger}. Suppose
that $p(A)>1$.  Then $p(x)=1$ has a unique solution $\mu$ in
$[0,A]$ and
\begin{equation}\label{size of mu}
\mu \approx \left[\sum_{k=2}^m |a_k|^{1/k} \right]^{-1}.
\end{equation}
\end{prop}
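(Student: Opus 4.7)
The plan is to handle existence and uniqueness of $\mu$ quickly and then prove the two-sided estimate $\mu\approx\nu$, where $\nu:=\bigl[\sum_{k=2}^m |a_k|^{1/k}\bigr]^{-1}$, as two independent bounds. Since $p$ is convex on $[0,A]$ with $p'(0)=0$, the derivative $p'$ is non-decreasing and non-negative on $[0,A]$, so $p$ is itself non-decreasing there; combined with $p(0)=0<1<p(A)$ and the fact that $p$ is a non-constant polynomial, this gives a unique $\mu\in(0,A)$ with $p(\mu)=1$.

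For the upper bound $\mu\lesssim\nu$, I will apply Lemma~\ref{lemma: Bruna, Nagel, Wainger} directly: $1=p(\mu)\ge C_m\sum_{k=2}^m |a_k|\mu^k$, so each individual term satisfies $|a_k|\mu^k\le C_m^{-1}$, i.e.\ $|a_k|^{1/k}\mu\le C_m^{-1/k}$. Summing over the finitely many $k\in\{2,\dots,m\}$ bounds $\mu\sum_k |a_k|^{1/k}$ by a constant depending only on $m$, which is the desired upper bound.

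For the lower bound $\mu\gtrsim\nu$, I will go in the opposite direction, using the trivial triangle inequality $p(x)\le \sum|a_k|x^k$ valid for $x\ge 0$ rather than Lemma~\ref{lemma: Bruna, Nagel, Wainger}. The very definition of $\nu$ forces $|a_k|^{1/k}\nu\le 1$ for every $k$, so for any $c\in(0,1]$ with $c\nu\le A$,
$$p(c\nu)\le \sum_{k=2}^m |a_k|(c\nu)^k=\sum_{k=2}^m c^k\bigl(|a_k|^{1/k}\nu\bigr)^k \le (m-1)c^2,$$
which is strictly less than $1$ once $c=c(m)>0$ is chosen small enough. Since $p$ is monotone on $[0,A]$, this yields $c\nu \le \mu$.

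The main obstacle, and the only place subtlety is required, is guaranteeing that the point $c\nu$ actually lies in $[0,A]$; otherwise monotonicity of $p$ on that interval cannot be invoked. Here is where the hypothesis $p(A)>1$ does its real work: if instead $c\nu>A$, then $A/\nu<c$, so $|a_k|^{1/k}A<c$ for every $k$, and applying the same calculation at $x=A$ would produce $p(A)\le (m-1)c^2<1$, contradicting $p(A)>1$. Thus the bad case is impossible, the lower bound closes, and the proposition follows.
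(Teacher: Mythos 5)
Your proof is correct and follows essentially the same route as the paper's: Lemma~\ref{lemma: Bruna, Nagel, Wainger} supplies the upper bound on $\mu$, and the trivial bound $p(x)\le\sum_{k}|a_k|x^k$ supplies the lower bound. The paper packages this by introducing the auxiliary solutions $y_1$, $y_2$ of $\tilde{p}(x)=1$ and $C_m\tilde{p}(x)=1$ and then estimating those; your version works directly with $\nu$ and the test point $c\nu$, which is a bit more streamlined but is also what forces you to verify $c\nu\le A$ --- an edge case you correctly dispatch using the hypothesis $p(A)>1$, and which the paper's formulation sidesteps.
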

\begin{proof}
This is a standard argument, included here for completeness.

It follows from Lemma \ref{lemma: Bruna, Nagel, Wainger} that
there exists $C_m$ such that for all $x\in [0,A]$ ,
$$C_m
\sum_{k=2}^m |a_k|x^k \le \sum_{k=2}^m a_k x^k \le \sum_{k=2}^m
|a_k| x^k .$$ Define $\tilde{p}(x)=\sum_{k=2}^m |a_k|x^k$.  Then
if $y_1$ is the positive solution to $\tilde{p}(x)=1 $ and $y_2$
is the positive solution to $C_m \tilde{p}(x)=1$, then $y_1 \le
\mu \le y_2$. It therefore suffices to show that $y_1$ and $y_2$
are comparable to the expression on the right of \eqref{size of
mu}. We show this for $y_2$.

By definition, $y_2$ satisfies $\displaystyle{\sum_{k=2}^m
C_m|a_k| y_2^k =1}$. Thus for every $k$, $2\le k \le m$, $C_m
|a_k| y_2^k \le 1$, and hence $y_2 \le [C_m^{1/k}
|a_k|^{1/k}]^{-1}$. Since this is true for any $k$, it is true for
the $k_0$ such that
$C_m^{1/{k_0}}|a_{k_0}|^{1/{k_0}}=\displaystyle{\max_{\{2\le k\le
m\}} \,C_m^{1/k}|a_k|^{1/k}}$.  On the other hand,
$$C_m^{1/{k_0}}|a_{k_0}|^{1/{k_0}}\ge \frac{1}{m-1}\sum_{k=2}^m C_m^{1/k}|a_k|^{1/k}.$$
It follows that
\begin{equation}
y_2 \le \left[\frac{1}{m-1}\sum_{k=2}^{m}C_m^{1/k}|a_k|^{1/k}
\right]^{-1}\le\frac{m-1}{C_m^{1/2}}\left[\sum_{k=2}^m |a_k|^{1/k}
\right]^{-1}.
\end{equation}
This gives the desired upper bound on $y_2$.

To obtain a lower bound, let $k_1$ be such that $\displaystyle{C_m
|a_{k_1}|y_2^{k_1}=\max_{\{2\le k \le m\}} \,C_m|a_k|y_2^k }$.
Then
$$(m-1)C_m|a_{k_1}|y_2^{k_1}\ge C_m \tilde{p}(y_2)=1 ,$$
and so
\begin{eqnarray*}
y_2&\ge&\left[(m-1)^{1/{k_1}}C_m^{1/{k_1}}|a_{k_1}|^{1/{k_1}}\right]^{-1}\\
&\ge&\left(\frac{1}{m-1}\right)^{1/2}\left(\frac{1}{C_m}
\right)^{1/m}\left[ |a_{k_1}|^{1/{k_1}}\right]^{-1}\\
&\ge&\left(\frac{1}{m-1}\right)^{1/2}\left(\frac{1}{C_m}
\right)^{1/m}\left[ \sum_{k=2}^m|a_k|^{1/k}\right]^{-1}.
\end{eqnarray*}
We have now proved the desired estimates on $y_2$.  The estimates
on $y_1$ follow by setting $C_m=1$.
\end{proof}
\subsection{Estimates of the integral \eqref{full integral}}
In this section we prove
\begin{lemma} \label{lemma: unif est on int e^p} If $\beta, \delta> 0$ and $p(x)=\beta x^4 + \gamma x^3 + \delta x^2$
attains its global minimum at the origin), then
\begin{equation}\label{estimate on full integral}
\int_{-\infty}^\infty e^{-[\beta x^4 + \gamma x^3 + \delta
x^2]}\,dx \approx
[\beta^{\frac{1}{4}}+|\gamma|^{\frac{1}{3}}+\delta^{\frac{1}{2}}]^{-1}.
\end{equation}
\end{lemma}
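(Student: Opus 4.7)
Set $M := \beta^{1/4} + |\gamma|^{1/3} + \delta^{1/2}$, so the claim is $\int_{-\infty}^\infty e^{-p(x)}\,dx \approx M^{-1}$. I would establish the upper and lower bounds separately, leveraging the four-piece decomposition of (\ref{four pieces}) together with the technical results of Section 4.

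\emph{Lower bound.} Since $\beta,\delta\ge 0$, a short case-check on the sign of $\gamma$ shows $p(x)\le \tilde p(|x|)$ for every real $x$, where $\tilde p(t):=\beta t^4+|\gamma|t^3+\delta t^2$. The polynomial $\tilde p$ is convex on $[0,\infty)$ with $\tilde p(0)=\tilde p'(0)=0$, so Proposition \ref{prop: size of mu s.t. p(mu)=1} (with $m=4$) produces a unique $\mu>0$ with $\tilde p(\mu)=1$ satisfying $\mu\approx M^{-1}$. Then $p(x)\le 1$ on $[-\mu,\mu]$, and $\int e^{-p}\ge 2e^{-1}\mu\gtrsim M^{-1}$.

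\emph{Upper bound.} The substitution $x\mapsto -x$ preserves $M$ and preserves both the integral and the hypothesis that $p$ attains its global minimum at $0$, so we may assume $p$ is convex on $(-\infty,0]$ and any interval of non-convexity lies in $(0,\infty)$, necessarily of the form $[A,(1+\alpha)A]$ with $A>0$ and, by Proposition \ref{prop: condition on C for 0 to be global min}, $0<\alpha\le 1+\sqrt 3$. Decompose as in (\ref{four pieces}) into $I+II+III+IV$. For $I$ and $II$, $p$ is convex with $p(0)=p'(0)=0$ and $p'$ monotone on each of $(-\infty,0)$ and $(0,A)$, so the appropriate cases of Lemma \ref{lemma: mod of HNW} apply; combining this with Proposition \ref{prop: size of mu s.t. p(mu)=1} (applied to $p$ on $[0,A]$ for $II$, and to $p(-\cdot)$ on $[0,\infty)$ for $I$, which is convex there) bounds each by a constant times $M^{-1}$.

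For $III$, Proposition \ref{prop: rel between p(A+C) and p(A)} gives $p(x)\ge p(A)\ge c\beta A^4$ on the non-convex interval of length $\alpha A$, whence $III\le \alpha A\,e^{-c\beta A^4}$; a two-case dichotomy on $\beta A^4$ (if $\beta A^4\lesssim 1$ then $A\lesssim \beta^{-1/4}\lesssim M^{-1}$, otherwise the exponential factor is overwhelming) yields $III\lesssim M^{-1}$. For $IV$, Proposition \ref{condition on C for 3 extrema} forces a subcase split: when $\alpha<2$, $p'$ is positive on $[(1+\alpha)A,\infty)$ and nondecreasing by $p''\ge 0$, so Lemma \ref{lemma: mod of HNW}(1) applies with $x_0=(1+\alpha)A$; when $\alpha\ge 2$, $p$ has a second local minimum at some $x^\star>(1+\alpha)A$, so $IV$ is further split at $x^\star$, with the decreasing piece controlled via Proposition \ref{prop: rel between p(A+C) and p(A)} and the increasing piece via Lemma \ref{lemma: mod of HNW}(1). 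In all subcases the resulting estimate carries the exponential factor $e^{-p((1+\alpha)A)}\le e^{-c\beta A^4}$, and the same size dichotomy on $\beta A^4$ then shows $IV\lesssim M^{-1}$.

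\emph{Main obstacle.} The most intricate step is the estimate of $IV$ in the subcase $\alpha\ge 2$, where one has to estimate the location of the second local minimum of $p$ and control the length of the interval over which $p$ is decreasing past $(1+\alpha)A$ in terms of $M^{-1}$; the pieces $I$, $II$, $III$ are essentially immediate applications of Lemma \ref{lemma: mod of HNW} and Propositions \ref{prop: size of mu s.t. p(mu)=1} and \ref{prop: rel between p(A+C) and p(A)}.
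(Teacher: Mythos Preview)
Your overall architecture matches the paper's: reduce to the non-convex case, use the four-piece decomposition \eqref{four pieces}, and show that the piece $I$ already has the right size while $II$, $III$, $IV$ are dominated by it. Your lower bound via the comparison $p(x)\le\tilde p(|x|)$ is correct and slightly cleaner than the paper's (which extracts the lower bound from $I$). The treatments of $I$, $II$, $III$ are also essentially the same as the paper's.

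There is, however, a real gap in your handling of $IV$ when $\alpha\ge 2$. You assert that ``in all subcases the resulting estimate carries the exponential factor $e^{-p((1+\alpha)A)}\le e^{-c\beta A^4}$,'' but this is false for the increasing piece $\int_{x^\star}^\infty e^{-p}$. Lemma~\ref{lemma: mod of HNW}(1) applied with base point $x^\star$ yields the prefactor $e^{-p(x^\star)}$, and $p(x^\star)$ need \emph{not} be $\gtrsim \beta A^4$: at the endpoint $\alpha=1+\sqrt 3$ the quadratic factor in \eqref{p in terms of A, B, alpha} has a double root, so $p(x^\star)=0$ and the exponential factor is $1$. Hence your dichotomy on $\beta A^4$ collapses exactly where $\alpha$ is near $1+\sqrt 3$. (The same issue arises for the decreasing piece $\int_{(1+\alpha)A}^{x^\star}$: Proposition~\ref{prop: rel between p(A+C) and p(A)} only controls $p$ at the inflection points, not the minimum $p(x^\star)$ on that interval.)

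The paper resolves this by Taylor-expanding $p$ about $x^\star$ and introducing $\varepsilon=9\alpha^2-12\alpha-12$. One computes that the level-set length at $x^\star$ is $\approx[\,B^{1/4}+B^{1/3}A^{1/3}+B^{1/2}A\,\varepsilon^{1/4}\,]^{-1}$, and that $p(x^\star)\approx BA^4\bigl(-3\alpha^2+12\alpha+12-(2+\alpha)\sqrt\varepsilon\bigr)$. When $\varepsilon$ is bounded below by an absolute constant (i.e.\ $\alpha$ bounded away from $2$), the level-set length is already $\approx M^{-1}$ and $e^{-p(x^\star)}\le 1$ suffices; when $\varepsilon$ is small (i.e.\ $\alpha$ near $2$), one checks $p(x^\star)\gtrsim BA^4$ and the exponential factor again gives $IV\lesssim I$. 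Your sketch correctly flags this subcase as the main obstacle, but the mechanism you propose (a uniform $e^{-c\beta A^4}$ factor) is not available; the actual argument requires this two-regime analysis in $\varepsilon$.
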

Since the result is already known for convex $p$, it suffice to
establish it for non-convex $p$, taking $\beta =
\displaystyle{\frac{B}{12}}$, $\gamma = -\displaystyle{\frac{B
A(2+\alpha)}{6}}$, and $\delta = \displaystyle{\frac{ B
A^2(1+\alpha)}{2}}$. As in \eqref{four pieces}, we consider this as
a sum of four integrals.
\subsubsection{The integral $I$.}
To estimate $I$, note that
$$q(x)=p(-x)=\frac{B}{12}[x^4+2A(2+\alpha)x^3+6A^2(1+\alpha)x^2] $$
is convex on $(0,\infty)$ with $q(0)=q'(0)=0$. Thus by Lemma
\ref{lemma: mod of HNW} and Proposition \ref{prop: size of mu s.t.
p(mu)=1}, $I$ satisfies the estimate \eqref{estimate on full
integral}, i.e.,
\begin{eqnarray}
I&\approx& \left[\left(\frac{B}{12}\right)^{\frac{1}{4}}+ \left(
\frac{B A}{6}(2+\alpha)\right)^{\frac{1}{3}}+\left(
\frac{B A^2}{2}(1+\alpha)\right)^{\frac{1}{2}}\right]^{-1}\nonumber\\
&\approx& \left[B^{\frac{1}{4}}+ B^{\frac{1}{3}} A^{\frac{1}{3}}+
B^{\frac{1}{2}}A\right]^{-1}\label{est on I}.
\end{eqnarray}
In \eqref{est on I}, we have also used Proposition \ref{prop:
condition on C for 0 to be global min} to conclude that $2+\alpha$
and $1+\alpha$ are both comparable to 1.

Since clearly $I \le I+II+III+IV$, the lemma will follow if we can
show that $II, III, IV \lesssim I$.
\subsubsection{The integral
$II$.} We have two cases, depending on whether $p(A) \ge 1$ or
$p(A)< 1$.

First, if $p(A) \ge 1$, then Lemma \ref{lemma: mod of HNW} and
Proposition \ref{prop: size of mu s.t. p(mu)=1} imply, as they did
in the case of integral $I$, that
\begin{equation}
II \approx \left[B^{\frac{1}{4}}+ B^{\frac{1}{3}} A^{\frac{1}{3}}+
B^{\frac{1}{2}}A\right]^{-1}\approx I\label{est on II; p(A)>1},
\end{equation}
as desired.

Suppose, then, that $p(A) < 1$.  Then by Lemma \ref{lemma: mod of
HNW},
\begin{equation}\label{est on II; p(A) small}
II \approx A.
\end{equation}
By Proposition \ref{prop: rel between p(A+C) and p(A)}, $c BA^4
\le p(A)$, and so if $p(A)< 1$, $B A^4 \lesssim 1$. Thus
\begin{eqnarray*}
A[B^{\frac{1}{4}}+B^{\frac{1}{3}}A^{\frac{1}{3}}+B^{\frac{1}{2}}A]&=&B^{\frac{1}{4}}A+B^{\frac{1}{3}}A^{\frac{4}{3}}+B^{\frac{1}{2}}A^2\\
&=&(BA^4)^{\frac{1}{4}}+(BA^4)^{\frac{1}{3}}+(BA^4)^{\frac{1}{2}}\\
&\lesssim&1.
\end{eqnarray*}
It follows from \eqref{est on II; p(A) small} that $II \lesssim
I$.

\subsubsection{The integral $III$.} This is the integral over the
interval on which $p''$ is negative. This forces the minimum of
$p$ on this interval to be either $p((1+\alpha)A)$ or $p(A)$. By
Proposition \ref{prop: rel between p(A+C) and p(A)}, both are
bounded below by $cB A^4$ for some uniform positive constant $c$.
Therefore
\begin{equation}\label{est on III}
III \le \alpha A e^{-c B A^4} \lesssim A e^{-c B A^4}.
\end{equation}
This contribution is always less than that from the integral $I$.
Indeed,
$$Ae^{-cB
A^4}[B^{\frac{1}{4}}+B^{\frac{1}{3}}A^{\frac{1}{3}}+B^{\frac{1}{2}}A]
=[(BA^4)^{\frac{1}{4}}+(BA^4)^{\frac{1}{3}}+(BA^4)^{\frac{1}{2}}]e^{-cBA^4}$$
is uniformly bounded since the function
$f(x)=(x^{\frac{1}{4}}+x^{\frac{1}{3}}+x^{\frac{1}{2}})e^{-cx}$ is
bounded on the positive real axis.

\subsubsection{The integral $IV$.} As with integrals $I$ and $II$,
we are integrating over an interval on which $p$ is convex. In
order to use Lemma \ref{lemma: mod of HNW}, we need to know the
minimum value of $p$ on this interval.  We distinguish two cases.

First, suppose the minimum occurs at $x=(1+\alpha)A$. Note that this
implies that $p'((1+\alpha)A)\ge 0$. We must find
$$|\{\,x>(1+\alpha)A: p[(1+\alpha)A]\le p(x) \le p[(1+\alpha)A]+1\,\}|. $$
If $y$ is the unique solution to $p(y)=p[(1+\alpha)A]+1$ in this
interval, then the desired measure is $\nu = y - (1+\alpha)A$.
Recall that $p$ has an inflection point at $x=(1+\alpha)A$ and
expand $p$ about $(1+\alpha)A$ to obtain
$$p[(1+\alpha)A]+p'((1+\alpha)A)(x-(1+\alpha)A)+\frac{\alpha BA}{6}(x-(1+\alpha)A)^3+\frac{B}{12}(x-(1+\alpha)A)^4. $$
Thus $\nu$ is the solution to
\begin{equation}\label{nu if p(A+C) min on (A+C,infty)}
p'((1+\alpha)A)\nu +\frac{\alpha BA}{6}\nu^3+\frac{B}{12}\nu^4=1.
\end{equation}
It follows that the solution to \eqref{nu if p(A+C) min on
(A+C,infty)} is less than the $\tilde{\nu}$ satisfying
\begin{equation}
\frac{\alpha B A}{6}\tilde{\nu}^3+\frac{B}{12}\tilde{\nu}^4=1.
\end{equation}
Since $\alpha, A, B, \tilde{\nu}>0$, $\nu \le \tilde{\nu} \approx
\left[B^{\frac{1}{4}}+B^{\frac{1}{3}}(\alpha A)^{\frac{1}{3}}
\right]^{-1}$. Thus
\begin{equation}\label{est on IV if min at A+C}
IV\lesssim e^{-c BA^4}\left[B^{\frac{1}{4}}+B^{\frac{1}{3}}(\alpha
A)^{\frac{1}{3}}\right]^{-1} .
\end{equation}
We claim that $IV \lesssim I$.  Indeed, since $\alpha>0$,
\begin{eqnarray*}
e^{-cBA^4}\frac{B^{\frac{1}{4}}+B^{\frac{1}{3}}A^{\frac{1}{3}}+B^{\frac{1}{2}}A}{B^{\frac{1}{4}}+B^{\frac{1}{3}}(\alpha A)^{\frac{1}{3}}}&\le&e^{-cBA^4}\frac{B^{\frac{1}{4}}+B^{\frac{1}{3}}A^{\frac{1}{3}}+B^{\frac{1}{2}}A}{B^{\frac{1}{4}}}\\
&=&e^{-cBA^4}[1+(B A^4)^{\frac{1}{12}}+(B A^4)^{\frac{1}{4}}].
\end{eqnarray*}
Since $f(x)=[1+x^{\frac{1}{12}}+x^{\frac{1}{4}}]e^{-cx}$ is a
bounded function on the positive real axis, the conclusion
follows.

Suppose, next, that the minimum of $p$ on $[(1+\alpha)A,\infty)$
occurs at some point $x_0$ interior to the interval at which $p'$
vanishes. In this case, $p'$ has three distinct real roots, and so
by Proposition \ref{condition on C for 3 extrema}, $2< \alpha \le
1+\sqrt{3}$. Precisely the same argument we used above to show
that, regardless of the size of $p(A)$,
$$\int_{-\infty}^A e^{-p(x)}\,dx\approx \int_{-\infty}^0 e^{-p(x)}\,dx \approx e^{-p(0)}|\{\,x<0:0<p(x)<1\,\}| $$
shows that, regardless of the size of $p[(1+\alpha)A]$,
\begin{equation}\label{IV less that main contrib. . . }
IV=\approx \int_{x_0}^\infty e^{-p(x)}\,dx\approx
e^{-p(x_0)}|\{\,x>x_0:p(x_0)< p(x) <p(x_0)+1\,\}|.
\end{equation}
Thus we must estimate $p(x_0)$ and the positive number $y$
satisfying $p(x_0)+1=p(x_0+y)$. Expanding $p$ in powers of
$y=x-x_0$ yields
\begin{eqnarray}
p(x)&=&p(x_0)+p'(x_0)y+\frac{1}{2}p''(x_0)y^2+\frac{1}{6}p'''(x_0)y^3+\frac{1}{24}p^{(4)}(x_0)y^4\nonumber\\
&=&p(x_0)+\frac{B}{2}[x_0^2-A(2+\alpha)x_0+A^2(1+\alpha)]y^2\nonumber\\
& &{}+\frac{B}{6}[2x_0-A(2+\alpha)]y^3+\frac{B}{12}y^4
\label{expansion about x_0}
\end{eqnarray}
Recall from the proof of Proposition \ref{condition on C for 3
extrema} that
$p'(x)=\frac{B}{6}x[2x^2-3A(2+\alpha)x+6A^2(1+\alpha)]$. Set
\begin{equation}\label{def of epsilon}
\varepsilon=9\alpha^2 - 12\alpha - 12.
\end{equation}
This is positive since $\alpha>2$.  Then
\begin{equation}\label{def of x_0}
x_0=\frac{A}{4}[3(2+\alpha)+\sqrt{\varepsilon}]
\end{equation}
and
\begin{equation*}
x_0^2=\frac{A^2}{8}[9\alpha^2+12\alpha+12+3(2+\alpha)\sqrt{\varepsilon}].\label{x_0^2}
\end{equation*}
Substituting \eqref{def of x_0} and \eqref{def of epsilon} into
\eqref{expansion about x_0} yields
\begin{eqnarray*}
\lefteqn{p(x_0+y)}\\
&=&p(x_0)+\frac{B
A^2}{48}\left[\varepsilon+3(2+\alpha)\sqrt{\varepsilon}
\right]y^2+\frac{B A
}{12}\left[2+\alpha+\sqrt{\varepsilon}\right]y^3+\frac{B}{12}y^4.
\end{eqnarray*}
Thus
$$1=\frac{B A^2}{48}\left[\varepsilon+3(2 + \alpha)\sqrt{\varepsilon}
\right]y^2+\frac{B
A}{12}\left[2+\alpha+\sqrt{\varepsilon}\right]y^3+\frac{B}{12}y^4,
$$
and so
\begin{equation}\label{size of sol to p(x)=p(x_0)+1}
y\approx \left[B^{\frac{1}{4}}+B^{\frac{1}{3}} A^{\frac{1}{3}}(2+
\alpha+\sqrt{\varepsilon})^{\frac{1}{3}}+B^{\frac{1}{2}}A
\varepsilon^{\frac{1}{4}}(\sqrt{\varepsilon}+3(2+\alpha))^{\frac{1}{2}}
\right]^{-1}.
\end{equation}
Since $2< \alpha \le 1+\sqrt{3}$, for such $\alpha$, $0 <
\varepsilon=3(3\alpha^2-4\alpha-4) \lesssim 1$.  Hence
\begin{equation}\label{better size est on y}
y\approx
\left[B^{\frac{1}{4}}+B^{\frac{1}{3}}A^{\frac{1}{3}}+B^{\frac{1}{2}}A\varepsilon^{\frac{1}{2}}
\right]^{-1}.
\end{equation}

Recall that we wish to show that $IV \lesssim I$, or,
equivalently, that
\begin{equation}\label{IV less than I}
e^{-p(x_0)}
\left[B^{\frac{1}{4}}+B^{\frac{1}{3}}A^{\frac{1}{3}}+B^{\frac{1}{2}}A\varepsilon^{\frac{1}{4}}
\right]^{-1} \lesssim
\left[B^{\frac{1}{4}}+B^{\frac{1}{3}}A^{\frac{1}{3}}+B^{\frac{1}{2}}A
\right]^{-1}.
\end{equation}
Since $e^{-p(x_0)}\le 1$ and $\varepsilon \lesssim 1$, this
follows immediately in the case in which $\varepsilon$ is also
bounded below by an absolute constant $\beta$.

To prove \eqref{IV less than I} for all $\varepsilon$, therefore,
it suffices to find an absolute constant $\beta$ such that
\eqref{IV less than I} holds for all $0 < \varepsilon \le \beta$.
Since such an estimate is likely to rely upon the relative
smallness of $e^{-p(x_0)}$ compared to $B A^4$, we need more
information about the size of $p(x_0)$. A calculation using
\eqref{p in terms of A, B, alpha} shows
\begin{eqnarray*}
\lefteqn{p(x_0)}\\
%&=&\frac{B}{12}x_0^2[x_0^2-2A(2+\alpha)x_0+6A^2(1+\alpha)]\\
%&=&\frac{B}{12}\cdot\frac{1}{8}(12A^2+12AC+9C^2+3(2A+C)\sqrt{\varepsilon})\\
%&&{}\times\left[\frac{3}{2}A^2+\frac{3}{2}AC+\frac{9}{8}C^2+\frac{3}{8}(2A+C)\sqrt{\varepsilon}-\frac{3}{2}(2A+C)^2-\frac{1}{2}(2A+C)\sqrt{\varepsilon}+6A^2+6AC\right]\\
%&=&\frac{B}{12}\cdot\frac{1}{8}(12A^2+12AC+9C^2+3(2A+C)\sqrt{\varepsilon})\\
%&&{}\times\left[\frac{3}{2}A^2+\frac{3}{2}AC-\frac{3}{8}C^2-\frac{1}{8}(2A+C)\sqrt{\varepsilon}\right]\\
&=&\frac{BA^4}{768}(9\alpha^2+12\alpha+12+3(2+\alpha)\sqrt{\varepsilon})(-3\alpha^2+12\alpha+12-(2+\alpha)\sqrt{\varepsilon})\\
&\approx&BA^4(-3\alpha^2+12\alpha+12-(2+\alpha)\sqrt{\varepsilon}).
\end{eqnarray*}
We claim that there exist positive constants $\beta$ and $d$ such
that for all $\alpha \in (2,1+\sqrt{3}]$, if
$\varepsilon\le\beta$,
$-3\alpha^2+12\alpha+12-(2+\alpha)\sqrt{\varepsilon}\ge d$, from
which it will follow that $p(x_0)\ge dBA^4$.

Indeed, it is easy to see that
$$-3\alpha^2+12\alpha+12-(2+\alpha)\sqrt{\varepsilon}\ge
6(1-\sqrt{\varepsilon}).$$ This is bounded below by 3 if
$\varepsilon \le \frac{1}{4}$. The claim follows.

To prove \eqref{IV less than I} when $\varepsilon \le
\frac{1}{4}$, we must show that
\begin{equation}\label{must be bdd for small epsilon}
e^{-dBA^4}\frac{B^{\frac{1}{4}}+B^{\frac{1}{3}}A^{\frac{1}{3}}+B^{\frac{1}{2}}A}{B^{\frac{1}{4}}+B^{\frac{1}{3}}A^{\frac{1}{3}}+B^{\frac{1}{2}}A\varepsilon^{\frac{1}{4}}}=e^{-dBA^4}\frac{1+(BA^4)^{\frac{1}{12}}+(BA^4)^{\frac{1}{4}}}{1+(BA^4)^{\frac{1}{12}}+(BA^4)^{\frac{1}{4}}(\varepsilon)^{\frac{1}{4}}}
\end{equation}
is bounded. This is indeed the case since
$$0\le f(x)= e^{-dx}\frac{1+x^{\frac{1}{12}}+x^{\frac{1}{4}}}{1+x^{\frac{1}{12}}+x^{\frac{1}{4}} \varepsilon^{\frac{1}{4}}}\le e^{-dx}(1+x^{\frac{1}{12}}+x^{\frac{1}{4}})$$
and the latter is bounded above on the positive real axis.

%Suppose first that $BA^4\le 1$.  Then in the ratio on the right of
%\eqref{must be bdd for small epsilon} the numerator is bounded
%above by $3$ and the denominator is bounded below by $1$.  The
%claim follows in this case.

%Suppose next that $BA^4>1$.  Then the right hand side of
%\eqref{must be bdd for small epsilon} is comparable to
%$$e^{-cBA^4}\frac{(BA^4)^{1/4}}{(BA^4)^{1/12}+(BA^4)^{1/4}\left(\frac{\varepsilon}{A^2} \right)^{1/4}}.$$
%If
%$\displaystyle{(BA^4)^{1/12}\ge(BA^4)^{1/4}\left(\frac{\varepsilon}{A^2}
%\right)^{1/4}}$ then this is comparable to
%$$e^{-cBA^4}\frac{(BA^4)^{1/4}}{(BA^4)^{1/12}}=(BA^4)^{1/6}e^{-cBA^4}, $$
%which is bounded since $f(x)=x^{1/6}e^{-cx}$ is bounded on the
%positive real axis for positive $c$.  If, on the other hand,
%$\displaystyle{(BA^4)^{1/12}<(BA^4)^{1/4}\left(\frac{\varepsilon}{A^2}
%\right)^{1/4}}$, then
%$\displaystyle{(BA^4)^{-1/6}<\left(\frac{\varepsilon}{A^2}
%\right)^{1/4}}$ and
%\begin{eqnarray*}
%e^{-cBA^4}\frac{(BA^4)^{1/4}}{(BA^4)^{1/12}+(BA^4)^{1/4}\left(\frac{\varepsilon}{A^2}
%\right)^{1/4}}
%&\approx&e^{-cBA^4}\frac{(BA^4)^{1/4}}{(BA^4)^{1/4}\left(\frac{\varepsilon}{A^2}
%\right)^{1/4}}\\
%&=&e^{-cBA^4}\left(\frac{A^2}{\varepsilon}\right)^{1/4}\\
%&<&(BA^4)^{1/6}e^{-cBA^4},
%\end{eqnarray*}
%which is bounded.  The claim is established.

%We have shown that regardless of the size of $p(A)$ and of
%$\varepsilon$
%\begin{equation}
%IV\lesssim I
%\end{equation}
%and consequently
%\begin{equation}\label{full integral comp to I}
%\int_{-\infty}^\infty e^{-p(x)}\,dx=I+II+III+IV \approx I.
%\end{equation}

\subsubsection{Another interpretation.}
Recall that \eqref{desired estimate} holds if $p$ is convex and
$p(0)=p'(0)=0$. We claim that our estimates show that the same is
true in the case of any fourth-degree polynomial with positive
leading coefficient and global minimum at the origin. Indeed, set
$$\mu = |\{\,x:p(x)\le 1\,\}|\quad\quad \mu^- = |\{\,x<0:p(x)\le 1\,\}|.$$
Clearly
$$e^{-1}\mu \le \int_{\{x:p(x)\le 1\}} e^{-p(x)}\,dx \le
\int_{-\infty}^\infty e^{-p(x)}\,dx.$$ On the other hand, the
estimates of the previous section imply the existence of a
constant $C>0$ such that
$$\int_{-\infty}^\infty e^{-p(x)}\,dx \le C \mu^-. $$
Since $\mu^- \le \mu$, it follows that $\int_{-\infty}^{\infty}
e^{-p(x)}\,dx \approx \mu$, as claimed.

\subsection{Remarks on Polynomials of Higher Degree.} The results of this paper can not easily be extended to tube domains \eqref{tube domain}
defined by higher-degree non-convex polynomials $b$ because it is
not clear what uniform estimate should replace Lemma \ref{lemma:
unif est on int e^p}.

Consider, for a moment, the analogue of Lemma \ref{lemma: unif est
on int e^p} for convex polynomials:
\begin{lemma} \label{lemma: for higher convex p} Let $n$ be a positive integer and define
$p(x)=\displaystyle{\sum_{j=2}^{2n} \beta_j x^j}$. Suppose $p$ is
convex on $\mathbb{R}$.  Then
\begin{equation}\label{4.30}
I:=\int_{-\infty}^\infty e^{-p(x)}\,dx \approx
\left[\sum_{j=2}^{2n}|\beta_j|^{\frac{1}{j}} \right]^{-1}.
\end{equation}
\end{lemma}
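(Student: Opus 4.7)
The plan is to split $I$ at the origin and apply part (1) of Lemma~\ref{lemma: mod of HNW} together with Proposition~\ref{prop: size of mu s.t. p(mu)=1} on each half. Since $p$ has no constant or linear term, $p(0)=p'(0)=0$, and convexity on $\mathbb{R}$ forces $0$ to be the global minimum of $p$. Write
\begin{equation*}
I = \int_{-\infty}^0 e^{-p(x)}\,dx + \int_0^\infty e^{-p(x)}\,dx =: I_- + I_+
\end{equation*}
and estimate each piece separately.

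For $I_+$: convexity together with the assumption that $p$ genuinely has degree $2n$ forces $\beta_{2n}>0$, so $p(x)\to\infty$ as $x\to\infty$. Since $p''\ge 0$ on $\mathbb{R}$, $p'$ is increasing; since $p'(0)=0$ and $p\not\equiv 0$, $p'$ must be strictly positive on $(0,\infty)$ (otherwise $p'$ would vanish on a non-degenerate subinterval of $[0,\infty)$, making $p$ constant there and hence identically zero, contradicting $\beta_{2n}>0$). Thus part (1) of Lemma~\ref{lemma: mod of HNW}, applied with $x_0=0$ and $x_f=+\infty$, gives $I_+ \approx \mu_+$, where $\mu_+$ is the unique positive root of $p(\mu_+)=1$. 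Choosing any $A>\mu_+$ and invoking Proposition~\ref{prop: size of mu s.t. p(mu)=1} on $[0,A]$ — whose hypotheses hold since $p(A)>1$ and $p$ is convex there — yields
\begin{equation*}
\mu_+ \approx \left[\sum_{j=2}^{2n}|\beta_j|^{1/j}\right]^{-1}.
\end{equation*}

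For $I_-$: set $q(x):=p(-x)$, which is again convex on $\mathbb{R}$ with $q(0)=q'(0)=0$ and coefficients $(-1)^j\beta_j$ having the same absolute values as those of $p$. The substitution $x\mapsto -x$ turns $I_-$ into $\int_0^\infty e^{-q(x)}\,dx$, and applying the argument just used for $I_+$ to $q$ in place of $p$ gives the identical estimate. Adding produces
\begin{equation*}
I = I_- + I_+ \approx \left[\sum_{j=2}^{2n}|\beta_j|^{1/j}\right]^{-1}.
\end{equation*}

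There is no real obstacle here, since the heavy lifting has already been carried out in the two cited results. The only delicate point is verifying their hypotheses — strict positivity of $p'$ on the open ray $(0,\infty)$, and the existence of $A$ with $p(A)>1$ — both of which follow immediately from global convexity together with positivity of the leading coefficient of $p$.
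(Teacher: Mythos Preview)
Your proof is correct and follows essentially the same route the paper indicates: the paper does not spell out a proof but simply remarks that the lemma ``follows easily from the results of Bruna, Nagel, and Wainger discussed above,'' and what you have written is precisely the natural way to cash that out using Lemma~\ref{lemma: mod of HNW} and Proposition~\ref{prop: size of mu s.t. p(mu)=1}. Your verification of the hypotheses (strict positivity of $p'$ on $(0,\infty)$, and $p(x)\to\infty$) is the only additional content, and it is handled correctly.
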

This lemma is not new; it follows easily from the results of
Bruna, Nagel, and Wainger discussed above.  We saw in Lemma
\ref{lemma: unif est on int e^p} that this same result holds if
$n=2$ even if we replace the hypothesis that $p$ is convex with
the weaker hypotheses that $p$ attains its global minimum at $0$
and $\beta_{2n}>0$.  We claim that such a result does {\it not}
hold if $n=3$.

Indeed, consider $p(x)=x^2(x-a)^4
=x^6-4ax^5+6a^2x^4-4a^3x^3+a^4x^2$, with $a>1$. Clearly $p$ is
non-negative, attains its global minimum at the origin, and is
convex for $x \le 0$. If \eqref{4.30} were true, we would have
both
$$\frac{1}{a^2} \approx [1+a^{\frac{1}{5}}+a^{\frac{1}{2}}+a+a^2]^{-1}
\approx I$$ and
$$ I \ge \int_a^{\infty} e^{-x^2(x-a)^4}\,dx =\int_0^{\infty}
e^{-(y+a)^2y^4}\,dy \approx
[1+a^{\frac{1}{5}}+a^{\frac{1}{2}}]^{-1} \approx
\frac{1}{a^\frac{1}{2}}.$$ (We have used in the above the
observation that $q(y)=(y+a)^2y^4$ is convex on the positive real
axis with global minimum at the origin.)  Since there is no
positive $C$ independent of $a>1$ such that
$\displaystyle{\frac{1}{a^2} \ge \frac{C}{a^{\frac{1}{2}}}}$, our
claim is established.

It is not hard to see what is going on; in the case of a
non-convex fourth-degree polynomial, if there are two competing
global minima, they are both points at which the polynomial
vanishes to order two.  A higher-degree polynomial can have
different orders of vanishing at different competing global
minima.  Thus order of vanishing must be taken into account in the
higher-degree case.

\section{Proof of Theorem \ref{theorem: absolute convergence}}
We now return to the analysis of the integral $N$ in
\eqref{N(eta,tau)}.
\begin{eqnarray*}
&=&e^{2\tau b^*(\eta)} \int_{-\infty}^\infty e^{2\tau[\eta \lambda
-b(\lambda) - B_\eta(\lambda(\eta))]} \,d\lambda\\
&=&e^{2\tau b^*(\eta)}\int_{-\infty}^\infty e^{2\tau [\eta \lambda
-b(\lambda)-\eta\lambda(\eta)+b(\lambda(\eta))]}\,d\lambda\\
&=&e^{2\tau b^*(\eta)}\int_{-\infty}^\infty e^{2\tau
[-b''(\lambda(\eta))\frac{(\lambda-\lambda(\eta))^2}{2}
-b'''(\lambda(\eta))\frac{(\lambda-\lambda(\eta))^3}{6}-\frac{(\lambda
- \lambda(\eta))^4}{4}]}\,d\lambda\\
&=&e^{2\tau b^*(\eta)}\int_{-\infty}^\infty e^{- [2\tau
b''(\lambda(\eta))\frac{y^2}{2}
+2\tau b'''(\lambda(\eta))\frac{y^3}{6}+2\tau\frac{y^4}{4}]}\,dy\\
&\approx&e^{2 \tau b^*(\eta)} \left[\left(\frac{\tau}{2}
\right)^{\frac{1}{4}}+\left|\frac{\tau b'''(\lambda(\eta))}{3}
\right|^{\frac{1}{3}}+\left(\tau b''(\lambda(\eta))
\right)^{\frac{1}{2}} \right]^{-1}\\
&\approx&e^{2 \tau b^*(\eta)}
\left[\tau^{\frac{1}{4}}+\tau^{\frac{1}{3}}
|\lambda(\eta)|^{\frac{1}{3}}+\tau^{\frac{1}{2}} (3\lambda(\eta)^2
+ p)^{\frac{1}{2}} \right]^{-1},
\end{eqnarray*}
where we have used the result of the previous section in the
second-last line.

We now prove Theorem \ref{theorem: absolute convergence}.  If we
show that each integral
\begin{equation}\label{integrals defining derivatives of S}
\int \!\!\! \int_{\tau>0} e^{\eta \tau
[z_1+\bar{w_1}]+i\tau[z_2-\bar{w_2}]}\frac{\eta^{i_1+j_1}
\tau^{i_1+j_1+i_2+j_2+1}}{N(\eta,\tau)}\,d\eta\,d\tau
\end{equation}
is absolutely convergent when
$$h+k+b(x)+b(r)-2b^{**}\left(\frac{x+r}{2} \right)>0 ,$$
it will follow that the integral in fact is equal to
$\partial^{i_1}_{z_1} \partial^{j_1}_{\bar{w_1}}
\partial^{i_2}_{z_2}
\partial^{j_2}_{\bar{w_2}}S(z,w)$.
% This is an application of theorems on the differentiation of the integral.  See Follaand p. 56.  We just need to note that for all of these integrals, the dependence on x, y, etc. is continuous on any set where the integral is convergent.

Set $\delta =h+k$, $z=(z_1,z_2)=(x+iy,t+ib(x)+ih, r+is,
u+ib(r)+ik)$, $i_1+j_1=n$, $i_1+j_1+i_2+j_2=m$ (so that $m \ge
n$). The integral becomes
\begin{equation}
S^{n,m,\delta}:=\int \!\!\! \int_{\tau>0} e^{\eta \tau
[x+r+i(y-s)]+i\tau[t-u+i(b(x)+b(r)+\delta)]}\frac{\eta^{n}
\tau^{m+1}}{N(\eta,\tau)}\,d\eta\,d\tau,
\end{equation}
which converges absolutely if and only if
\begin{equation}\label{int for abs conv}
\widetilde{S}^{n,m,\delta} :=\int_{-\infty}^\infty
\!\int_{0}^\infty e^{-\tau[\delta +b(x)+b(r)-\eta(x+r)]}
\frac{|\eta|^n \tau^{m+1}}{N(\eta,\tau)} \,d\tau\,d\eta < \infty.
\end{equation}
We see that
\begin{eqnarray*}
\widetilde{S}^{n,m,\delta} &\approx& \int_{-\infty}^\infty \!
\int_{0}^{\infty} e^{-\tau[\delta
+b(x)+b(r)-\eta(x+r)+2b^*(\eta)]}\\
& &{}\times
\left[\tau^{\frac{1}{4}}+\tau^{\frac{1}{3}}|\lambda(\eta)|^{\frac{1}{3}}+\tau^{\frac{1}{2}}(3\lambda(\eta)^2
+p)^{\frac{1}{2}}\right] |\eta|^n \tau^{m+1}\,d\tau\,d\eta.\\
&:=&\mathcal{I}^{n,m,\delta}_1 + \mathcal{I}^{n,m,\delta}_2 +
\mathcal{I}^{n,m,\delta}_3.
\end{eqnarray*}
(Since the superscripts are cumbersome, we will often omit them.)
Furthermore, let $\mathcal{I}_i(\eta)$ denote the integrand of the
$\eta$ integral defining $\mathcal{I}_i$, so that
$$\mathcal{I}_i = \int_{-\infty}^\infty \mathcal{I}_i(\eta)\,d\eta.$$

Set
\begin{equation}\label{A(x,r,eta)}
A(x,r,\eta) = b(x)+b(r)-\eta(x+r)+ 2b^*(\eta).
\end{equation}
Since
\begin{equation}\label{5.x}
A(x,r,\eta)=\sup_{\lambda}[\eta \lambda - b(\lambda)]-[\eta x -
b(x)]+ \sup_{\lambda}[\eta \lambda -b(\lambda)]-[\eta r - b(r)],
\end{equation}
$A$ is non-negative.

Each $\mathcal{I}_i(\eta)$ involves an integral in $\tau$ of the
form
\begin{equation}\label{int first in tau}
\int_0^\infty e^{-\tau [\delta+A(x,r,\eta)]}\tau^a \,d\tau
\end{equation}
which equals
\begin{equation}\label{result of tau int}
c_a\frac{1}{[\delta+A(x,r,\eta)]^{a+1}}  \quad\text{if $\delta
+A(x,r,\eta)> 0.$}
\end{equation}
It is now clear that there are two potential barriers to the
convergence of the full integrals $\mathcal{I}_i$:
\begin{enumerate}
\item insufficient growth of $A$ in $\eta$ at infinity, and

\item vanishing of $\delta + A(x,r,\eta)$ for some finite $\eta$
for certain choices of $x$, $r$, $\delta$.
\end{enumerate}
The next subsections explore these issues and in so doing
establish the theorem.

\subsection{Behavior of $A(x,r,\eta)$ for large $\eta$.}

\begin{lemma} \label{Lemma: asy. of A} Fix $x,r\in\mathbb{R}$.  Then
$A(x,r,\eta) \sim \frac{3}{2} \eta^{\frac{4}{3}}$ as $|\eta| \to
\infty$.
\end{lemma}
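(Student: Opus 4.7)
The plan is to reduce the asymptotic statement for $A$ to an asymptotic statement for the Legendre transform $b^*$, and then exploit the explicit form of $b$ and the critical point equation from Section 3.

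First, since $x$ and $r$ are fixed, the contribution $b(x) + b(r) - \eta(x+r)$ is $O(|\eta|)$ as $|\eta| \to \infty$, which is $o(|\eta|^{4/3})$. Hence it suffices to show that $2b^{*}(\eta) \sim \frac{3}{2} \eta^{4/3}$ as $|\eta| \to \infty$, i.e., $b^{*}(\eta) \sim \frac{3}{4} \eta^{4/3}$. I will treat $\eta \to +\infty$; the case $\eta \to -\infty$ is symmetric, using part (v) of Proposition \ref{prop: maxima of B} in place of part (iii).

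For $\eta > q + \bigl(\tfrac{4(-p)^3}{27}\bigr)^{1/2}$, part (iii) of Proposition \ref{prop: maxima of B} gives $b^{*}(\eta) = B_\eta(\lambda_+(\eta)) = \eta\lambda_+ - b(\lambda_+)$, with $\lambda_+ = \lambda_+(\eta)$ satisfying the critical point equation
\begin{equation*}
\lambda_+^{3} + p\lambda_+ = \eta - q,
\end{equation*}
and with $\lambda_+(\eta) \sim \eta^{1/3}$. The next step is to use this equation to eliminate the quartic term. Multiplying by $\lambda_+$ yields $\lambda_+^4 = (\eta-q)\lambda_+ - p\lambda_+^2$, so substituting into $b(\lambda_+) = \tfrac14\lambda_+^4 + \tfrac12 p\lambda_+^2 + q\lambda_+$ collapses $b(\lambda_+)$ to a linear combination of $\lambda_+$ and $\lambda_+^2$, after which
\begin{equation*}
b^{*}(\eta) = \eta\lambda_+ - b(\lambda_+) = \frac{3(\eta-q)}{4}\lambda_+ - \frac{p}{4}\lambda_+^{2}.
\end{equation*}

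Finally, since $\lambda_+(\eta) = \eta^{1/3}[1 + o(1)]$, the first term above is $\tfrac{3}{4}\eta^{4/3}[1+o(1)]$, while the second term is $O(\eta^{2/3})$. Therefore $b^{*}(\eta) \sim \tfrac{3}{4}\eta^{4/3}$, and combining with the linear-in-$\eta$ reduction of the first paragraph yields $A(x,r,\eta) \sim \tfrac{3}{2}\eta^{4/3}$. There is no real obstacle here: the proof is essentially a bookkeeping exercise once one notes that the critical point equation lets one express $b(\lambda_+)$ as a lower-order polynomial in $\lambda_+$, so that the leading $\eta^{4/3}$ behavior of $b^{*}$ is driven entirely by the pure quartic $\tfrac14\lambda^4$ part of $b$, as one expects from scaling.
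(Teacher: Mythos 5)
Your proof is correct, but it takes a slightly different (and arguably cleaner) route than the paper's. The paper simply substitutes the asymptotic $\lambda(\eta)=\eta^{1/3}(1+o(1))$ into the full expression
$A(x,r,\eta)=b(x)+b(r)-\eta(x+r)+2\bigl[\eta\lambda(\eta)-b(\lambda(\eta))\bigr]$
and expands each power of $(1+o(1))$, collecting $2\bigl(1-\tfrac14\bigr)=\tfrac32$ from the two leading terms. You instead first observe that the $x,r$-dependent part is $O(|\eta|)=o(|\eta|^{4/3})$, reducing the claim to an asymptotic for $b^*$, and then use the critical point equation $\lambda^3+p\lambda=\eta-q$ to derive the \emph{exact} identity
$b^*(\eta)=\tfrac{3(\eta-q)}{4}\lambda(\eta)-\tfrac{p}{4}\lambda(\eta)^2$
before applying $\lambda(\eta)\sim\eta^{1/3}$. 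Both arguments ultimately rest on the same ingredient from Proposition~\ref{prop: maxima of B}, but your intermediate identity isolates the origin of the $\tfrac34$ coefficient transparently, avoids juggling $(1+o(1))^4$ factors, and would generalize more readily to higher-degree $b$; the paper's version is more direct but algebraically slightly messier. One small point worth making explicit: for $\eta\to-\infty$ the convention $\eta^{4/3}=(\eta^{1/3})^4=|\eta|^{4/3}>0$ is in force, and your first term $\tfrac{3(\eta-q)}{4}\lambda(\eta)\sim\tfrac34\eta\cdot\eta^{1/3}$ is indeed positive in that regime, so the symmetry you invoke does go through.
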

\begin{proof}
Recall from Proposition \ref{prop: maxima of B} that
$\lambda(\eta) \sim \eta^{\frac{1}{3}}$ as $|\eta| \to \infty$,
i.e., $\lambda(\eta) = \eta^{\frac{1}{3}}(1+o(1))$ as $|\eta| \to
\infty$.  Thus as $|\eta| \to \infty$,
\begin{eqnarray*}
\lefteqn{A(x,r,\eta)}\\
%&=&b(x)+b(r)-\eta(x+r)+2b^*(\eta)\\
&=&b(x)+b(r)-\eta(x+r)+2(\eta \lambda(\eta) - b[\lambda(\eta)])\\
&=&b(x)+b(r)-\eta(x+r)\\
& &{}+2\left[\eta^{\frac{4}{3}}(1+o(1))
-\frac{1}{4}\eta^{\frac{4}{3}}(1+o(1))^4 -
\frac{1}{2}p\eta^{\frac{2}{3}}(1+o(1))^2 -
q\eta^{\frac{1}{3}}(1+o(1))\right]\\
&=&\frac{3}{2}\eta^{\frac{4}{3}}+\eta^{\frac{4}{3}}o(1)+O(|\eta|)\\
&=&\frac{3}{2}\eta^{\frac{4}{3}}(1+o(1)).
\end{eqnarray*}
\end{proof}
\begin{remark}
Our arguments can be extended to obtain a generalized asymptotic
expansions for $\lambda(\eta)$ and $A(x,r,\eta)$. See Olver
\cite{Olver}, Section 1.5 for a detailed discussion of such
techniques.
\end{remark}

This lemma, equation \eqref{result of tau int}, and parts (iii)
and (v) of Proposition \ref{prop: maxima of B} allow us to
conclude the following:

\begin{enumerate}
\item $\mathcal{I}^{n,m,\delta}_1(\eta) \sim c
(\eta^{\frac{4}{3}})^{-(\frac{5}{4} + m + 1)} |\eta|^n=c
|\eta|^{-3-\frac{4}{3}m+n}$. Since $m \ge n \ge 0$,
$-3-\frac{4}{3}m + n \le -3$, and so for any fixed $n$, $m$, and
$\delta$, $\mathcal{I}^{n, m, \delta}_1$ is convergent at
infinity.

\item $\mathcal{I}^{n,m,\delta}_2(\eta) \sim c
(\eta^{\frac{4}{3}})^{-(\frac{4}{3} + m + 1)} \cdot
|\eta^{\frac{1}{3}}|^{\frac{1}{3}}=c |\eta|^{-3-\frac{4}{3}m +
n},$ and so each $\mathcal{I}^{n,m,\delta}_2$ is convergent at
infinity.

\item $\mathcal{I}^{n,m,\delta}_3(\eta) \sim c
(\eta^{\frac{4}{3}})^{-(\frac{3}{2} + m+ 1)}\cdot
|\eta|^{\frac{1}{3}}=c |\eta|^{-3-\frac{4}{3}m+n},$ and so each
$\mathcal{I}^{n,m,\delta}_3$ is convergent at infinity.

\end{enumerate}

\subsection{Vanishing of $\delta + A(x,r,\eta)$}

The estimates of the previous sections show that whether or not
the integrals $\mathcal{I}_i$ converge depends upon whether or not
for some fixed $x$, $r$, and $\delta$ the function $\eta \mapsto
\delta+A(x,r,\eta)$ vanishes for some finite $\eta_0$ and, if so,
the behavior of this function near such a point. In particular, we
have proved
\begin{prop}
If for some $x$, $r$, and $\delta$ fixed $$\inf_{\eta}
[\delta+A(x,r,\eta)] > 0, $$ then each
$\mathcal{I}^{n,m,\delta}_i$ is finite.
\end{prop}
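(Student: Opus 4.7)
The plan is to turn each double integral into a single $\eta$-integral and then check integrability on a compact set and on the tails separately. First, since the integrand is non-negative, Fubini applies, and under the hypothesis the inner $\tau$-integral converges via \eqref{result of tau int}: for every $\eta$ we have $\delta+A(x,r,\eta)\ge c_0>0$, so
$$
\mathcal{I}^{n,m,\delta}_i=\int_{-\infty}^{\infty}\frac{c_i\,|\eta|^n\,g_i(\eta)}{[\delta+A(x,r,\eta)]^{a_i+1}}\,d\eta,
$$
where $g_1\equiv 1$, $g_2(\eta)=|\lambda(\eta)|^{1/3}$, $g_3(\eta)=(3\lambda(\eta)^2+p)^{1/2}$, and the exponents are $a_1+1=m+\tfrac{9}{4}$, $a_2+1=m+\tfrac{7}{3}$, $a_3+1=m+\tfrac{5}{2}$.

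Next I would verify local integrability. Fix any compact $K\subset\mathbb{R}$. By Corollary \ref{cor: cont of b*} the map $\eta\mapsto A(x,r,\eta)$ is continuous, and combined with the uniform lower bound $\delta+A\ge c_0$ this keeps the denominator bounded below on $K$, hence its reciprocal bounded above. The factor $|\eta|^n$ is obviously bounded on $K$. Since $\lambda(\eta)$ maps $\mathbb{R}$ into $(-\infty,-\sqrt{-p})\cup[\sqrt{-p},\infty)$ and is continuous off a single jump at $\eta=q$ (at which $|\lambda|$ itself is in fact continuous), both $|\lambda(\eta)|^{1/3}$ and $(3\lambda(\eta)^2+p)^{1/2}$ are bounded on $K$; note in particular that $3\lambda(\eta)^2+p\ge -2p>0$ so no square-root issues arise. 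Hence each integrand $\mathcal{I}^{n,m,\delta}_i(\eta)$ is bounded on $K$, and the contribution from any compact piece of the $\eta$-line is finite.

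For the tails I would invoke the asymptotic analysis carried out immediately after Lemma \ref{Lemma: asy. of A}, which gives $\mathcal{I}^{n,m,\delta}_i(\eta)\sim c|\eta|^{-3-\frac{4}{3}m+n}$ as $|\eta|\to\infty$. Because $m\ge n\ge 0$, the exponent is at most $-3$, which is integrable. Combining the local bound with this tail decay yields $\mathcal{I}^{n,m,\delta}_i<\infty$ for each $i$, establishing the proposition. There is no genuine obstacle: all of the serious analytic work — the estimate on $N(\eta,\tau)$, the asymptotics of $\lambda$ and of $A$, and the continuity of $b^*$ — has already been done, and the hypothesis $\inf_\eta[\delta+A]>0$ precisely removes the only remaining threat to convergence, namely a singularity of the $\eta$-integrand at some finite $\eta_0$.
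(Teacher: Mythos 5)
Your proof is correct and follows essentially the same structure as the paper's: after performing the $\tau$-integral using \eqref{result of tau int}, the hypothesis $\inf_\eta[\delta+A]>0$ eliminates any finite singularity, continuity of $b^*$ (Corollary \ref{cor: cont of b*}) controls the integrand on compacts, and the tail decay follows from Lemma \ref{Lemma: asy. of A} together with the computation that $\mathcal{I}_i(\eta)\sim c|\eta|^{-3-\frac{4}{3}m+n}$ with exponent $\le -3$ since $m\ge n\ge 0$. The only difference is one of emphasis: you spell out the compact-set boundedness explicitly (including the useful observation that $3\lambda(\eta)^2+p\ge -2p>0$ and that $|\lambda|$ is bounded despite the jump of $\lambda$ at $\eta=q$), whereas the paper identifies the same two ``barriers'' — vanishing of $\delta+A$ and growth at infinity — and treats the first as resolved by hypothesis without further comment; both routes are the same argument.
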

Note, moreover, that
\begin{eqnarray*}
\inf_{\eta} [\delta+A(x,r,\eta)]&=&\delta +
b(x)+b(r)-2\sup_{\eta}\left[\eta\left(\frac{x+r}{2}
\right)-b^*(\eta)\right]\\
&=&\delta + b(x)+b(r)-2b^{**}\left(\frac{x+r}{2} \right).
\end{eqnarray*}
(The convexity of $b^*$ guarantees the finiteness of the supremum
in the first line.)  We have thus proved that the integrals
defining the Szeg\"o kernel and all its derivatives converge
absolutely in the region
\begin{equation}\label{region for abs conv}
\delta+b(x)+b(r)-2b^{**}\left(\frac{x+r}{2}\right)>0.
\end{equation}
We do not yet know which $x$, $r$, and $\delta$ are in this set.
We claim first that if $z=(z_1,z_2)=(x+iy,t+ib(x)+ih)\in \Omega$
and $w=(w_1,w_2)=(r+is,t+ib(r)+ik)\in\Omega$, $(z,w)$ is in the
region in $\C{2}$ defined by \eqref{region for abs conv}.  Indeed,
$z,w\in \Omega$ implies $h,k>0$, and hence $\delta=h+k>0$.  It
follows that $\delta + A(x,r,\eta) \ge \delta$, and hence its
infimum over $\eta$ is bounded below by $\delta$ as well.  Thus
the inequality in \eqref{region for abs conv} is satisfied.

To prove the remainder of Theorem \ref{theorem: absolute
convergence}, we must determine which  $(z,w) \in \partial \Omega
\times
\partial \Omega$ are in the region \eqref{region for abs conv}.
For such $(z,w)$, $\delta = 0$.  We thus need to determine all
(fixed) $x$ and $r$ for which $A(x,r,\eta)$ is bounded away from
zero independent of $\eta$.

By \eqref{5.x}, $A$ is a sum of two non-negative functions
\begin{gather}
A_x(\eta):=\sup_{\lambda}(\eta \lambda - b(\lambda))-(\eta x -
b(x))=b^*(\eta)- (\eta x - b(x))\\
A_r(\eta):=\sup_{\lambda}(\eta \lambda - b(\lambda))-(\eta r -
b(r))=b^*(\eta)-(\eta r - b(r)).
\end{gather}
Thus for fixed $x$ and $r$, $A$ vanishes at some $\eta_0$ if and
only if both $A_x(\eta_0)$ and $A_r(\eta_0)$ vanish. Furthermore,
by Corollary \ref{cor: cont of b*}, $\eta \to A(x,r,\eta)$ is
continuous, and by Lemma \ref{Lemma: asy. of A}, $A(x, r, \eta)
\sim c \eta^{\frac{4}{3}}$ as $|\eta| \to \infty$. Thus if for
some fixed $x$ and $r$, $A(x,r,\cdot)$ never vanishes, it is
bounded below by a positive constant for all $\eta$. We thus
identify $(z,w)$ in the region \eqref{region for abs conv} by
identifying pairs $x$ and $r$ for which $A(x,r,\cdot)$ never
vanishes.

\noindent{\it Case 1: $|x|<\sqrt{-p}$ or $|r|<\sqrt{-p}$.} For
definiteness, suppose $|x|<\sqrt{-p}$. $A_x$ could only vanish if
$x$ were such that, for some value of $\eta$, the supremum of
$B_\eta(\lambda)=\eta \lambda - b(\lambda)$ were achieved at $x$.
But Proposition \ref{prop: maxima of B} shows that the supremum of
$B_\eta$ is always achieved at one or more points {\it outside of}
$(-\sqrt{-p},\sqrt{-p})$. This completes the proof in this case.

\noindent{\it Case 2: $|x|,|r| > \sqrt{-p}$, and $x \ne r$.} Since
the map $\eta \mapsto \lambda(\eta)$  maps $\mathbb{R} \setminus
\{q\}$ onto $\mathbb{R} \setminus [-\sqrt{-p},\sqrt{-p}]$ and is
injective, there exists a unique $\eta_1 \ne q$ and a unique
$\eta_2 \ne q$ such that $\lambda(\eta_1)=x$ and
$\lambda(\eta_2)=r$. Since $x \ne r$, $\eta_1 \ne \eta_2$. It
follows that in this case $A(x,r,\cdot)$ never vanishes.

\noindent{\it Case 3: $|x|=\sqrt{-p}$ but $|r|>\sqrt{-p}$.} (A
symmetric argument covers the case $|r|=\sqrt{-p}$ but
$|x|>\sqrt{-p}$.) Then $A_x(\eta)=0$ only at $\eta = q$ whereas
one easily computes that $A_r(q)=\frac{1}{4}(r^2+p)^2 > 0$.  Thus
$A(x,r,\cdot)$ does not vanish.

This completes the proof of Theorem \ref{theorem: absolute
convergence}.\hfill \qed

\section{Proof of Theorem \ref{theorem: x=-r singularity}}

We begin by observing that
\begin{eqnarray*}
\lefteqn{S[(x,i(b(x)+h)),(r,i(b(r)+k))]}\\
&=&c \int \!\!\! \int_{\tau>0} \tau e^{\eta \tau
(x+r)-\tau[b(x)+b(r)+h+k]}
N(\eta,\tau)^{-1}\,d\eta\,d\tau\\
&=&\widetilde{S}^{0,0,\delta},
\end{eqnarray*}
and
\begin{eqnarray*}
\mathcal{S}[(x,0,0),(r,0,0)]&=&c \int \!\!\! \int_{\tau>0} \tau
e^{\eta \tau (x+r)-\tau[b(x)+b(r)]}
N(\eta,\tau)^{-1}\,d\eta\,d\tau\\
&=&\widetilde{S}^{0,0,0},
\end{eqnarray*}
where $\widetilde{S}^{n,m,\delta}$ is as defined in \eqref{int for
abs conv}.  We will shorten the notation for these integrals to
$\widetilde{S}^{\delta}$.  Thus to prove Theorem \ref{theorem:
x=-r singularity}, we must show that
\begin{enumerate}
\item[(i)]  $\widetilde{S}^0$ is divergent, and

\item[(ii)] $\lim_{\delta \to 0^+} \widetilde{S}^{\delta}=\infty$.
\end{enumerate}
It is immediately clear that (ii) will follow from (i) since the
integrand of $\widetilde{S}^\delta$ is non-negative and converges
pointwise and monotonically to the integrand of $\widetilde{S}^0$
as $\delta \to 0^+$. Furthermore, (i) will follow if the
corresponding statement holds for any of the three integrals
$\mathcal{I}^{0,0,0}_i$ (again, abbreviated $\mathcal{I}^{0}_i$).
We will show that
\begin{enumerate}
\item[(iii)]  $\mathcal{I}_1^0$ is divergent.
\end{enumerate}
As we saw in the previous section, $\mathcal{I}_1^{0}$ converges
if $x$ and $r$ are chosen in such a way that $A(x,r,\cdot)$ never
vanishes. Thus in order to establish (iii), we need detailed
information about the behavior of $A$ near values $\eta_0$ for
which $A(x,r,\eta_0)=0$. Recall that if $A(x,r,\eta) \ne 0$,
\eqref{result of tau int} shows that the integrand of
$\mathcal{I}_1^{0}$ is comparable to
\begin{equation}\label{integrand of I_1}
[A(x,r,\eta)]^{-\frac{9}{4}}.
\end{equation}
%This means, in particular, that for fixed $x$ and $r$, as $\delta
%\to 0^+$, $\mathcal{I}^{\delta}_1(\eta)$ increases pointwise and
%monotonically to $\mathcal{I}^0_1(\eta)$.  Thus (iv) will follow
%from (iii).
We prove (iii) by considering the behavior of $A$ in three
subcases.

\subsection{Case 1: $x=r$ and $|x|>\sqrt{-p}$.}
In this case, there exists a unique $\eta_0 \ne q$ such that
$x=r=\lambda(\eta_0)$.

Suppose $\eta \ne \eta_0$ and recall that $\eta =
[\lambda(\eta)]^3 +p \lambda(\eta) + q$ and $\eta_0 = x^3 + p x+q
$ so that
\begin{equation}\label{eta - eta_0}
\eta_0 - \eta = (x - \lambda(\eta))(x^2 + x \lambda(\eta) +
[\lambda(\eta)]^2 + p) .
\end{equation}
Then (suppressing the dependence of $\lambda$ on $\eta$ )
\begin{eqnarray*}
A(x,x,\eta)&=&2A_x(\eta)\\
&=&2 [\eta \lambda - \frac{1}{4}\lambda^4 -\frac{p}{2}\lambda^2
-q\lambda -\eta x + \frac{1}{4}x^4 + \frac{p}{2}x^2 + q x]\\
&=&2(x-\lambda)\left[\frac{1}{4}(x+\lambda)(x^2+\lambda^2) +
\frac{p}{2}(x+\lambda)-(\eta-q)\right]\\
&=&2(x-\lambda)\left[\frac{1}{4}(x+\lambda)(x^2+\lambda^2) +
\frac{p}{2}(x+\lambda)-\lambda^3 - p\lambda\right]\\
&=&\frac{1}{2}(x-\lambda)^2(x^2+2\lambda x +3\lambda^2+2p).
\end{eqnarray*}
We are concerned with how this function varies with $\eta$. We
have the following proposition and corollary:
\begin{prop} \label{prop: expressions bounded below} For $|x| > \sqrt{-p}$ fixed,
\begin{enumerate}
\item[(a)]
$$x^2+x\lambda(\eta)+[\lambda(\eta)]^2+p \ge \begin{cases}-2p &
|x|\ge 2\sqrt{-p}\\
|x|(|x|-\sqrt{-p}) & \sqrt{-p}<|x|<2\sqrt{-p}.\end{cases}$$

\item[(b)] $$x^2 + 2x\lambda(\eta)+3[\lambda(\eta)]^2+2p \ge
\begin{cases}-4p &|x|\ge 3\sqrt{-p}\\(|x|-\sqrt{-p})^2 & \sqrt{-p}< |x| <3\sqrt{-p}.\end{cases}$$

\end{enumerate} Thus both expressions are bounded below by a
positive constant independent of $\eta$.
\end{prop}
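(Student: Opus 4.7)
\medskip
\noindent\textbf{Proof plan.} The key observation is that $\lambda(\eta)$ depends on $\eta$ only through its range. By the earlier proposition, $\lambda(\eta)$ takes values in $(-\infty,-\sqrt{-p})\cup[\sqrt{-p},\infty)$, so in particular $|\lambda(\eta)|\ge\sqrt{-p}$ for every $\eta$. My plan is therefore to forget $\eta$ altogether and simply minimize each quadratic polynomial in $\lambda$ over the closed set $E=\{\lambda:|\lambda|\ge\sqrt{-p}\}$.

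For part (a), I would write
\[
\lambda^{2}+x\lambda+(x^{2}+p)=\bigl(\lambda+\tfrac{x}{2}\bigr)^{2}+\bigl(\tfrac{3}{4}x^{2}+p\bigr),
\]
so the unconstrained minimizer is $\lambda=-x/2$. When $|x|\ge 2\sqrt{-p}$, the constant term $\tfrac{3}{4}x^{2}+p$ is already $\ge-2p$, giving the first bound regardless of whether the minimizer lies in $E$. When $\sqrt{-p}<|x|<2\sqrt{-p}$, the minimizer $-x/2$ lies in the forbidden interval $(-\sqrt{-p},\sqrt{-p})$, so the minimum on $E$ of the upward parabola is achieved at the endpoint of $E$ closer to $-x/2$, namely $\lambda=-\operatorname{sgn}(x)\sqrt{-p}$. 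Substituting gives $(-p)-|x|\sqrt{-p}+x^{2}+p=|x|(|x|-\sqrt{-p})$, which is the second bound and is positive because $|x|>\sqrt{-p}$.

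For part (b), the same strategy applies to
\[
3\lambda^{2}+2x\lambda+(x^{2}+2p)=3\bigl(\lambda+\tfrac{x}{3}\bigr)^{2}+\bigl(\tfrac{2}{3}x^{2}+2p\bigr),
\]
whose unconstrained minimizer is $-x/3$. If $|x|\ge 3\sqrt{-p}$, then $\tfrac{2}{3}x^{2}+2p\ge-4p$ and we are done. Otherwise $|{-x/3}|<\sqrt{-p}$, so again the constrained minimum occurs at $\lambda=-\operatorname{sgn}(x)\sqrt{-p}$; substituting and simplifying yields $x^{2}-2|x|\sqrt{-p}-p=(|x|-\sqrt{-p})^{2}$, which is positive under the hypothesis. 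The final sentence of the proposition then follows because both piecewise lower bounds are strictly positive constants depending only on $x$ and $p$, hence independent of $\eta$.

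There is no real obstacle here; the only thing to keep track of is the sign of $x$, which determines which of $\pm\sqrt{-p}$ is the relevant boundary point of $E$. A brief remark that the upward-opening parabolas are monotone away from their vertices on each connected component of $E$ makes the boundary evaluation rigorous without further casework.
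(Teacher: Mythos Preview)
Your proposal is correct and follows essentially the same approach as the paper: both reduce to minimizing the relevant quadratic in $\lambda$ over $\{|\lambda|\ge\sqrt{-p}\}$, split into the case where the vertex lies in this set versus the case where the minimum is attained at an endpoint $\pm\sqrt{-p}$, and then evaluate. Your version is in fact slightly more explicit---you complete the square, pin down the relevant endpoint as $-\operatorname{sgn}(x)\sqrt{-p}$, and carry out part (b), which the paper omits as analogous.
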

\begin{proof}
Recall that $|\lambda(\eta)| \ge \sqrt{-p}$.  Our task in part (a)
is thus to find the global minimum of $f(\lambda)=x^2
+x\lambda+\lambda^2+p$ on $\{\,\lambda : |\lambda| \ge \sqrt{-p}
\,\}$.  There are two cases to consider depending on whether $f$
attains its minimum at a critical point or at
$\lambda=\pm\sqrt{-p}$.

Observe that $f'(\lambda)=x+2\lambda=0$ when $\lambda =
-\frac{1}{2}x$. If $\frac{1}{2}|x| \ge \sqrt{-p}$, this indeed is
the location of the global minimum, which is then seen to be
$\frac{3}{4}x^2 + p \ge -2p$. If $\frac{1}{2}|x| < \sqrt{-p}$, the
global minimum is one of the two quantities $x^2 \pm x \sqrt{-p}
-p + p$, which is in turn $\ge x^2 - |x|
\sqrt{-p}=|x|(|x|-\sqrt{-p})$. This proves (a). The proof of (b)
is similar and is omitted.
\end{proof}
\begin{cor} \label{cor: A near eta_0} If $|x| > \sqrt{-p}$,
$$A(x,x,\eta) \approx (\eta - \eta_0)^2 (1+|\eta|)^{-\frac{2}{3}}.$$
\end{cor}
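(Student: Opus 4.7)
The plan is to start from the factorization of $A(x,x,\eta)$ established inside the proof of Proposition \ref{prop: expressions bounded below}, namely
$$A(x,x,\eta)=\tfrac{1}{2}(x-\lambda(\eta))^{2}\bigl(x^{2}+2x\lambda(\eta)+3[\lambda(\eta)]^{2}+2p\bigr),$$
and combine it with the identity
$$\eta-\eta_{0}=(\lambda(\eta)-x)\bigl(x^{2}+x\lambda(\eta)+[\lambda(\eta)]^{2}+p\bigr)$$
obtained as equation \eqref{eta - eta_0}. Solving the second for $(x-\lambda(\eta))^{2}$ and substituting yields
$$A(x,x,\eta)=\tfrac{1}{2}(\eta-\eta_{0})^{2}\,\frac{Q_{1}(x,\lambda(\eta))}{[Q_{2}(x,\lambda(\eta))]^{2}},$$
where $Q_{1}=x^{2}+2x\lambda+3\lambda^{2}+2p$ and $Q_{2}=x^{2}+x\lambda+\lambda^{2}+p$. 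The whole statement thus reduces to showing that, for $|x|>\sqrt{-p}$ fixed,
$$\frac{Q_{1}(x,\lambda(\eta))}{[Q_{2}(x,\lambda(\eta))]^{2}}\approx (1+|\eta|)^{-2/3}$$
uniformly in $\eta\in\mathbb{R}$.

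I would split this into two regimes. For $|\eta|\le M$, with $M$ any fixed constant, $\lambda(\eta)$ remains bounded (it is continuous and satisfies $|\lambda(\eta)|\sim |\eta|^{1/3}$ at infinity), so $Q_{1}$ and $Q_{2}$ are bounded above by constants depending only on $x$ and $M$; the positive \emph{lower} bounds on $Q_{1}$ and $Q_{2}$ supplied by parts (a) and (b) of Proposition \ref{prop: expressions bounded below} then make $Q_{1}/Q_{2}^{2}$ comparable to $1$, which is in turn comparable to $(1+|\eta|)^{-2/3}$ on this compact range. For $|\eta|\ge M$ (with $M$ chosen large), I would use the asymptotic $\lambda(\eta)=\eta^{1/3}(1+o(1))$ from parts (iii) and (v) of Proposition \ref{prop: maxima of B} to expand
$$Q_{1}=3[\lambda(\eta)]^{2}+O(|\lambda(\eta)|)=3\eta^{2/3}(1+o(1)),\qquad Q_{2}=[\lambda(\eta)]^{2}+O(|\lambda(\eta)|)=\eta^{2/3}(1+o(1)),$$
so that $Q_{1}/Q_{2}^{2}=3\eta^{-2/3}(1+o(1))$, matching $(1+|\eta|)^{-2/3}$ up to a universal constant.

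The two regimes patch together by choosing $M$ large enough that the implicit constants in the asymptotics are, say, within a factor of $2$ of their limiting values; since both $Q_{1}/Q_{2}^{2}$ and $(1+|\eta|)^{-2/3}$ are continuous and positive on the transition window $M\le|\eta|\le 2M$, the ratios are bounded above and below on this compact set by positive constants, and can be absorbed into the final comparability constant. The only real obstacle is ensuring that the lower bounds on $Q_{1}$ and $Q_{2}$ coming from Proposition \ref{prop: expressions bounded below} depend only on $|x|$ and not on $\eta$, which is exactly what is stated there; everything else is bookkeeping between the two regimes.
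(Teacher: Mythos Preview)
Your proposal is correct and follows essentially the same route as the paper: both substitute \eqref{eta - eta_0} into the factorization $A(x,x,\eta)=\tfrac12(x-\lambda)^2 Q_1$ to write $A(x,x,\eta)=(\eta-\eta_0)^2 Q_1/(2Q_2^2)$, invoke Proposition~\ref{prop: expressions bounded below} to bound $Q_1/Q_2^2$ above and below on compact $\eta$-sets, and use $\lambda(\eta)\sim\eta^{1/3}$ from Proposition~\ref{prop: maxima of B} for large $|\eta|$. The only cosmetic slip is that the factorization of $A(x,x,\eta)$ appears in the Case~1 discussion just \emph{before} Proposition~\ref{prop: expressions bounded below}, not inside its proof.
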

\begin{proof}
By Proposition \ref{prop: expressions bounded below}, we may write
$$A(x,x,\eta)=(\eta - \eta_0)^2\frac{x^2+2x\lambda(\eta)+3[\lambda(\eta)]^2+2p}{2(x^2+x\lambda(\eta)+[\lambda(\eta)]^2+p)^2}:=(\eta - \eta_0)^2 g(\eta).$$
The proposition shows that $g$ is finite for all $\eta$ and
bounded away from zero. Thus for $\eta$ on any fixed interval
$[-K,K]$, $g(\eta) \approx 1$.  On the other hand, Proposition
\ref{prop: maxima of B} shows that $\lambda(\eta)\sim
\eta^{\frac{1}{3}}$ as $|\eta| \to \infty$, and so $g(\eta)
\approx |\eta^{\frac{1}{3}}|^{-2}$ for $|\eta| > K$. Thus for all
$\eta$, $g(\eta) \approx (1+|\eta|)^{-\frac{2}{3}}.$
\end{proof}
By \eqref{integrand of I_1},
\begin{equation}\label{behavior of I_1(eta)}
\mathcal{I}_1^0(\eta) \approx [(\eta -
\eta_0)^2(1+|\eta|)^{-\frac{2}{3}}]^{-\frac{9}{4}},
\end{equation}
and thus $\mathcal{I}^0_1$ is divergent, establishing (iii) in
this case.

%This calculation also shows
%\begin{equation}\label{behavior of I^delta_1(eta)}
%\mathcal{I}_1^{\delta}(\eta) \approx [\delta + (\eta -
%\eta_0)^2(1+|\eta|)^{-\frac{2}{3}}]^{-\frac{9}{4}}.
%\end{equation}

\subsection{Case 2: $x=r$ and $|x|=\sqrt{-p}$.}
Here, the analysis is slightly more delicate because the
discontinuity of $\lambda$ occurs at $\eta = q$, and one of $\pm x
- \lambda(\eta)$ vanishes at $\eta = q$.  In this situation, the
analogue of \eqref{eta - eta_0} above is the relationship
\begin{equation}\label{eta - q}
\eta - q = \lambda(\eta)[\lambda(\eta) - x][\lambda(\eta)+x].
\end{equation}
Furthermore, since $x^2 = -p$, in this case
$A(x,x,\eta)=\frac{1}{2}(x-\lambda)^2(3\lambda - x)(\lambda+x)$.
\begin{prop} Let $A$ be as above.
\begin{enumerate}
\item If $x=\sqrt{-p}$, then for $\eta > q$,
$$A(x,x,\eta) \approx (\eta - q)^2(1+|\eta|)^{-\frac{2}{3}} .$$

\item If $x=-\sqrt{-p}$, then for $\eta < q$,
$$A(x,x,\eta) \approx (\eta - q)^2(1+|\eta|)^{-\frac{2}{3}} .$$

\end{enumerate}
\end{prop}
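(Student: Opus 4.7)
My plan is to derive a factored formula for $A(x,x,\eta)$ that isolates the factor $(\eta - q)^2$ and then estimate the remaining $\lambda$-dependent quantity. Since $x^2 = -p$, the quadratic factor in the expression $A(x,x,\eta) = \frac{1}{2}(x-\lambda)^2(x^2 + 2x\lambda + 3\lambda^2 + 2p)$ simplifies to $3\lambda^2 + 2x\lambda + p$, which factors as $(3\lambda - x)(\lambda + x)$, yielding $A(x,x,\eta) = \frac{1}{2}(x-\lambda)^2(3\lambda - x)(\lambda + x)$, as already noted in the excerpt. I then use the identity $\eta - q = \lambda(\lambda - x)(\lambda + x)$ recorded in \eqref{eta - q} to rewrite $(\lambda - x)(\lambda + x) = (\eta - q)/\lambda$, square, and substitute to obtain
\begin{equation*}
A(x,x,\eta) \;=\; \frac{(\eta - q)^2 \, (3\lambda(\eta) - x)}{2\,[\lambda(\eta)]^2 \,[\lambda(\eta) + x]}.
\end{equation*}
The proposition thus reduces to showing that the $\lambda$-dependent prefactor is comparable to $(1 + |\eta|)^{-2/3}$ on each prescribed half-line.

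For this I would exploit the sign information from Proposition \ref{prop: maxima of B}. In case (1), $\eta > q$ forces $\lambda(\eta) = \lambda_+(\eta) > \sqrt{-p} = x > 0$, so $\lambda + x \ge 2\sqrt{-p}$ and $3\lambda - x \ge 2\sqrt{-p}$; in case (2), $\eta < q$ forces $\lambda(\eta) = \lambda_-(\eta) < -\sqrt{-p} = x < 0$, so $|\lambda + x| \ge 2\sqrt{-p}$ and $|3\lambda - x| \ge 2\sqrt{-p}$. In both cases, $\lambda$, $\lambda + x$, and $3\lambda - x$ share a common sign, the factor $\lambda + x$ is bounded away from zero uniformly in $\eta$, and the three magnitudes $|\lambda(\eta)|$, $|\lambda(\eta) + x|$, $|3\lambda(\eta) - x|$ are each comparable to $|\lambda(\eta)|$ throughout the relevant half-line. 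Consequently the prefactor is comparable to $|\lambda(\eta)|^{-2}$.

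It remains to compare $|\lambda(\eta)|^{-2}$ with $(1 + |\eta|)^{-2/3}$. Near $\eta = q$ (approached from the appropriate side), $|\lambda(\eta)| \to \sqrt{-p}$, so both expressions are bounded above and bounded below by positive constants; for $|\eta|$ large, parts (iii) and (v) of Proposition \ref{prop: maxima of B} give $|\lambda(\eta)| \sim |\eta|^{1/3}$, whence $|\lambda(\eta)|^{-2} \sim |\eta|^{-2/3}$, matching the right-hand side. Continuity of $\lambda$ on $(q, \infty)$ and on $(-\infty, q)$ then patches the two regimes together and yields the desired two-sided bound. The one genuine subtlety is the sign bookkeeping, which ensures that $\lambda + x$ stays away from zero — precisely the role played by the one-sided restriction on $\eta$, which is why the cases $\eta > q$ (for $x = \sqrt{-p}$) and $\eta < q$ (for $x = -\sqrt{-p}$) must be handled separately.
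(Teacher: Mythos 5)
Your proof is correct and follows essentially the same route as the paper: factor $A(x,x,\eta)=\tfrac{1}{2}(x-\lambda)^2(3\lambda-x)(\lambda+x)$, substitute via \eqref{eta - q} to extract the factor $(\eta-q)^2$, observe that the one-sided restriction on $\eta$ keeps $|\lambda+x|$ and $|3\lambda-x|$ bounded below by $2\sqrt{-p}$, and then invoke $\lambda(\eta)\sim\eta^{1/3}$ at infinity. You merely spell out the details (the explicit prefactor $\tfrac{3\lambda-x}{2\lambda^2(\lambda+x)}\approx\lambda^{-2}$ and the comparability $|\lambda|^{-2}\approx(1+|\eta|)^{-2/3}$) that the paper leaves implicit.
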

\begin{proof}
In both cases, it is enough to observe that for the values of
$\eta$ indicated, both $|x+\lambda(\eta)| \ge 2|x| > 0$ and
$|3\lambda(\eta) - x | \ge 2|x|>0$.  We may thus solve \eqref{eta
- q} for $x-\lambda$ and substitute into the expression for $A$.
The estimate then follows, again using the fact (Proposition
\ref{prop: maxima of B}) that $\lambda(\eta) \sim
\eta^{\frac{1}{3}}$ as $|\eta| \to \infty$.
\end{proof}
Thus in this case, as above, $\mathcal{I}^0_1$ is divergent.

\subsection{Case 3: $|x|=|r|=\sqrt{-p}$ but $x=-r$.} The point here is that although $x \ne r$, there is an $\eta_0$
for which $B_{\eta_0}$ achieves its global maximum at both $x$ and
$r$: when $\eta = q$ and  $x=\pm\sqrt{-p}$ and $r=\mp\sqrt{-p}$
(See Proposition \ref{prop: maxima of B}).  In this case,
$A_x(\eta)$ vanishes if and only if $\eta = q$. For $\eta \ne q$
\begin{equation}\label{eqn: A_x if |x|=sqrt(-p)}
A_{\pm\sqrt{-p}}(\eta)=(\eta-q)(\lambda(\eta) \mp
\sqrt{-p})-\frac{1}{4}([\lambda(\eta)]^2+p)^2.
\end{equation}
\begin{prop} \label{prop: A if |x|=sqrt(-p)} If $|x|=\sqrt{-p}$,
$$A(x,-x,\eta)=(\eta - q)h(\eta),$$
where
$$|h(\eta)| \approx (1+|\eta|)^{\frac{1}{3}}.$$
\end{prop}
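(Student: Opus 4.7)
The plan is to compute $A(x,-x,\eta)=A_{\sqrt{-p}}(\eta)+A_{-\sqrt{-p}}(\eta)$ directly from \eqref{eqn: A_x if |x|=sqrt(-p)}: the $\mp\sqrt{-p}$ pieces in the two summands cancel, while the two $-\tfrac{1}{4}(\lambda^2+p)^2$ terms add, leaving
\[
A(x,-x,\eta)=2(\eta-q)\lambda(\eta)-\tfrac{1}{2}\bigl(\lambda(\eta)^2+p\bigr)^{2}.
\]
The task is then to show that a single factor of $\eta-q$ can be extracted from the right-hand side in a clean way.

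The key algebraic step is to exploit the defining relation $\eta-q=\lambda(\eta)^3+p\lambda(\eta)=\lambda(\eta)[\lambda(\eta)^2+p]$. Because Proposition~\ref{prop: maxima of B} guarantees $|\lambda(\eta)|\ge\sqrt{-p}>0$, we may divide by $\lambda(\eta)$ to obtain $\lambda(\eta)^2+p=(\eta-q)/\lambda(\eta)$, hence $(\lambda(\eta)^2+p)^{2}=(\eta-q)^{2}/\lambda(\eta)^{2}$. Substituting and then rewriting $(\eta-q)/\lambda(\eta)^2=\lambda(\eta)+p/\lambda(\eta)$ collapses the bracketed expression and yields
\[
A(x,-x,\eta)=(\eta-q)\,h(\eta),\qquad h(\eta)=\frac{3\lambda(\eta)^{2}-p}{2\lambda(\eta)}.
\]
Recognizing this hidden $(\eta-q)$ factor inside $(\lambda(\eta)^{2}+p)^{2}$ is the only non-routine step; everything else is formal.

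It remains to verify $|h(\eta)|\approx(1+|\eta|)^{1/3}$, which I would do in two regimes. Since $|\lambda(\eta)|\ge\sqrt{-p}$ and $p<0$, the numerator obeys $3\lambda(\eta)^{2}-p\ge -4p>0$, so $|h(\eta)|=(3\lambda(\eta)^{2}-p)/(2|\lambda(\eta)|)$. For $|\eta|$ large, Proposition~\ref{prop: maxima of B} gives $\lambda(\eta)\sim\eta^{1/3}$, so $|h(\eta)|\sim\tfrac{3}{2}|\eta|^{1/3}$. For $|\eta|$ bounded, continuity of $\lambda$ away from $q$ together with the one-sided limits $\lambda(\eta)\to\pm\sqrt{-p}$ as $\eta\to q^{\pm}$ confines $|\lambda(\eta)|$ to a compact subinterval of $[\sqrt{-p},\infty)$, so $|h(\eta)|$ is pinched between two positive constants and is therefore $\approx 1\approx(1+|\eta|)^{1/3}$. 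Combining the two regimes proves the stated comparability.
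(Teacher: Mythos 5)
Your proof is correct and follows essentially the same route as the paper: you sum the two expressions from \eqref{eqn: A_x if |x|=sqrt(-p)}, observe the $\mp\sqrt{-p}$ cancellation, use the identity $\lambda(\eta)^2+p=(\eta-q)/\lambda(\eta)$ (i.e.\ equation \eqref{eta - q}) to extract the factor of $\eta-q$, and arrive at the same $h(\eta)=\frac{3\lambda(\eta)^2-p}{2\lambda(\eta)}$, which you then bound via the same two-regime argument (compactness for bounded $\eta$, $\lambda(\eta)\sim\eta^{1/3}$ for large $|\eta|$). The paper's derivation is a bit more compressed, but the key algebraic step and the comparability estimate are identical to yours.
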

\begin{proof}
It follows from \eqref{eta - q} that
$$[\lambda(\eta)]^2+p = \frac{\eta - q}{\lambda(\eta)} $$
and so
\begin{eqnarray*}
A(\pm\sqrt{-p}, \mp\sqrt{-p},\eta)&=&2(\eta -
q)\left[\lambda(\eta)-\frac{[\lambda(\eta)]^2+p}{4\lambda(\eta)}
\right]\\
&=&(\eta - q)\frac{3[\lambda(\eta)]^2 - p}{2\lambda(\eta)}\\
&:=&(\eta - q) h(\eta).
\end{eqnarray*}
Since the numerator of $h$ is bounded below by $-4p > 0$ and the
denominator is bounded in absolute value away from zero, it
follows that if we fix an interval $[-K,K]$, $|h(\eta)| \approx 1$
on the interval.

On the other hand, since $\lambda(\eta) \sim \eta^{\frac{1}{3}}$
as $|\eta|\to \infty$, for sufficiently large $K$,
$$|h(\eta)|\approx |\eta|^{\frac{1}{3}} \quad\text{for $|\eta|>K$}.$$
The proposition follows.
\end{proof}
Since the integrand of $\mathcal{I}^0_1$ is $\approx
[|\eta-q|(1+|\eta|)^\frac{1}{3}]^{-\frac{9}{4}}$,
$\mathcal{I}^0_1$ diverges in this case as well. The proof of
Theorem \ref{theorem: x=-r singularity} is now complete.

\bibliographystyle{amsalpha}
\bibliography{references}

\providecommand{\bysame}{\leavevmode\hbox to3em{\hrulefill}\thinspace}
\providecommand{\MR}{\relax\ifhmode\unskip\space\fi MR }
% \MRhref is called by the amsart/book/proc definition of \MR.
\providecommand{\MRhref}[2]{%
  \href{http://www.ams.org/mathscinet-getitem?mr=#1}{#2}
}
\providecommand{\href}[2]{#2}
\begin{thebibliography}{NRSW89}

\bibitem[BNW88]{BrNagWa:88}
J.~Bruna, A.~Nagel, and S.~Wainger, \emph{Convex hypersurfaces and {F}ourier
  transforms}, Ann. of Math. \textbf{127} (1988), 333--365.

\bibitem[Car05]{CarracinoPHD}
C.~Carracino, \emph{Estimates for the {S}zeg{\"o} kernel on a model
  non-pseudoconvex domain}, Ph.D. thesis, University of Wisconsin, 2005.

\bibitem[Car07]{Carracino:07}
\bysame, \emph{Estimates for the {S}zeg{\"o} kernel on a model non-pseudoconvex
  domain}, Illinois J. Math \textbf{51} (2007), no.~4, 1363--1396.

\bibitem[Has95]{Haslinger:95}
F.~Haslinger, \emph{Singularities of the {S}zeg{\"o} kernel for certain weakly
  pseudoconvex domains in $\mathbb{C}^2$}, Journal of Functional Analysis
  \textbf{129} (1995), 406--427.

\bibitem[HNW10]{HNW:09}
J.~Halfpap, A.~Nagel, and S.~Wainger, \emph{The {B}ergman and {S}zeg{\"o}
  kernels near points of infinite type}, Pacific Journal of Mathematics
  \textbf{246} (2010), no.~1, 75--128.

\bibitem[Ker72]{Kerzman:72}
N.~Kerzman, \emph{The {B}ergman kernel function. {D}ifferentiability at the
  boundary}, Math. Ann. \textbf{195} (1972), 149--158.

\bibitem[McN94]{McNeal:94}
J.~D. McNeal, \emph{Estimates on the {B}ergman kernels of convex domains},
  Advances in Mathematics \textbf{109} (1994), 108--139.

\bibitem[MS94]{McNealStein:94}
J.~D. McNeal and E.~M. Stein, \emph{Mapping properties of the {B}ergman
  projection on convex domains of finite type}, Duke Math. J. \textbf{73}
  (1994), no.~1, 177--199.

\bibitem[MS97]{McNealStein:97}
\bysame, \emph{The {S}zeg{\"o} projection on convex domains}, Math. Z.
  \textbf{224} (1997), no.~4, 519--553.

\bibitem[Nag86]{Nag(Beijing):86}
A.~Nagel, \emph{Vector fields and non-isotropic metrics}, Beijing Lectures in
  Harmonic Analysis (E.~M. Stein, ed.), Princeton University Press, 1986.

\bibitem[NRSW89]{NRSW:89}
A.~Nagel, J.~P. Rosay, E.~M. Stein, and S.~Wainger, \emph{Estimates for the
  {B}ergman and {S}zeg{\"o} kernels in $\mathbb{C}^2$}, Ann. of Math.
  \textbf{129} (1989), 113--149.

\bibitem[Olv97]{Olver}
F.~W.~J. Olver, \emph{Asymptotics and special functions}, A. K. Peters, 1997.

\bibitem[Ste72]{St(BB):72}
E.~M. Stein, \emph{Boundary behavior of holomorphic functions of several
  complex variables}, Princeton University Press, Princeton, NJ, 1972.

\end{thebibliography}

\end{document}